\documentclass[english,a4paper,12pt,twoside,reqno]{amsart}

\usepackage{amsmath,amsfonts,latexsym,amssymb,amsthm,amscd}
\usepackage{xcolor}
\usepackage[tips,matrix,arrow,frame]{xy}
\usepackage{tikz}
\usepackage{etex}

\numberwithin{equation}{section}

\setlength{\topmargin}{-7mm}
\setlength{\textheight}{235mm}
\setlength{\textwidth}{165mm}
\setlength{\oddsidemargin}{0.15cm}
\setlength{\evensidemargin}{0.15cm}

%
%


\newcommand{\C}{\mathbb{C}}

\newcommand{\Z}{\mathbb{Z}}

\newcommand{\fgl}{\mathfrak{gl}}

\newcommand{\diag}{\mathrm{diag}}
\newcommand{\End}{\mathrm{End}}
\newcommand{\Ker}{\mathrm{Ker}}

\newcommand{\id}{\mathrm{id}}

\newcommand{\off}{\mathrm{off}}
\newcommand{\op}{\mathrm{op}}

\newcommand{\Tr}{\mathrm{Tr}}

\newtheorem{thm}{Theorem}[section]
\newtheorem{prop}[thm]{Proposition}
\newtheorem{lemma}[thm]{Lemma}
\newtheorem{cor}[thm]{Corollary}

\theoremstyle{definition}
\newtheorem{rem}[thm]{Remark}
\newtheorem{ack}{Acknowledgment}
\newtheorem{free text}[thm]{}

\newcommand{\fg}{\mathfrak{g}}

\newcommand{\fm}{\mathfrak{m}}

\newcommand{\cB}{\mathcal{B}}

\newcommand{\cR}{\mathcal{R}}

\begin{document}
\title{Homology of the Lie algebra $\fgl(\infty,R)$ }
\author{A. Fialowski and K. Iohara}
\address{University of P\'ecs and E\"{o}tv\"{o}s Lor\'{a}nd University Budapest,  Hungary}
\email{fialowsk@ttk.pte.hu, fialowsk@cs.elte.hu}
\address{Univ Lyon, Universit\'{e} Claude Bernard Lyon 1, CNRS UMR 5208, Institut Camille Jordan, 
43 Boulevard du 11 Novembre 1918, F-69622 Villeurbanne cedex, France}
\email{iohara@math.univ-lyon1.fr}

\begin{abstract} In this note we compute the homology of the Lie algebra $\fgl(\infty,R)$ where $R$ is an associative unital $k$-algebra which is used in higher dimensional soliton theory \cite{Ta}.  When $k$ is a field of characteristic $0$, our result justifies an old result of Feigin and Tsygan \cite{FT}. The special case when $R=k=\C$ appeared first in soliton theory (cf. \cite{JM}). 
\end{abstract}

\maketitle

 {}\let\thefootnote\relax\footnote{{\sl Keywords:}\,  Infinite dimensional Lie algebras, Lie algebra Homology, Cyclic and Hochschild Homology, smash product, spectral sequence   \\
   \indent   {\it 2010 MSC:}\,  Primary 17B65, 16S35; Secondary 16E40.}

\begin{center}
\textit{Dedicated to the memory of our friend Jean-Louis Loday}
\end{center}
\bigskip



\setcounter{section}{-1}

\section{Introduction}
\medskip

Among several versions of the Lie algebra $\fgl$ of infinite rank, the Lie algebra $\fgl(\infty)$, that has been extensively used to describe the soliton solutions of the Kadomtsev-Petviashvili (KP in short) hierarchy (see, e.g., \cite{DJM} for detail) in the first half of 1980's, has a special feature. For example, the Lie algebra $\fgl(\infty)$ is neither ind-finite nor pro-finite. For this reason, it had been a difficult task to analyze its algebraic properties. 

In 1983, B. Feigin and B. Tsygan published a short note \cite{FT} (only 2 pages long !) where they determined the homology of the Lie algebra $\fgl(\infty, k)$ where $k$ is a field of characteristic $0$. They denoted this Lie algebra by $\fg J(k)$ that is recalled in \S \ref{sect_def-Jacobi}. Unfortunately, it seems that their paper is too dense to decompress, so that this article had not been studied  carefully in the mathematical community. At the same time, their note generated much interest, and - even 35 years later - the statements are important.

In this paper, we managed to compute the homology of the Lie algebra  $\fgl(\infty, R)$, where $R$ is an associative unital $k$-algebra and $k$ is a field of characteristic $0$. The case $R=k$ was treated by B. Feigin and B. Tsygan briefly in their note  \cite{FT} . We hope that our paper - beside generalizing the case which shows up in soliton theory - also makes the article   \cite{FT} accessible to the mathematical community.

By an argument similar to B. Feigin and B. Tsygan \cite{FT}, we have seen that the primitive part of the homology $H_\bullet(\fg J(R),k)$ is isomorphic to the cyclic homology $HC_{\bullet-1}(J(R))$, where $J(R)=\fg J(R)$ (as $k$-vector space) viewed as an associative $k$-algebra. Hence, the real problem is to express this cyclic homology in terms of the homology theory of $R$, namely, without intervention of $J$. With the aid of an analogue of the Hochschild-Serre type spectral sequence due to D. Stefan \cite{St}, 
we see that this spectral sequence degenerates at $E^2$-term. This allows us to show in Theorem \ref{thm-Hochschild-J(R)} that the homology $HH_\bullet(J(R))$ is isomorphic to $HH_{\bullet-1}(R)$. Furthermore, a detailed analysis of the above spectral sequence shows that a similar phenomena is valid, that is, the cyclic homology $HC_\bullet(J(R))$ is isomorphic to the cyclic homology $HC_{\bullet-1}(R)$.

Combining the above mentioned results, we obtain 
\[ \textrm{The primitive part of $H_\bullet(\fg J(R))$} \; = \; HC_\bullet(R)[2]\; =
   (\textrm{The primitive part of $H_\bullet(\fgl_\infty(R))$})[1],
\]
that is, $\fgl_\infty(R)$ is obtained by \textbf{delooping} $\fg J(R)$ at the homological level ! \\


Throughout this paper, a field $k$ of characteristic $0$ is fixed. 
\begin{ack}
The second author is also grateful to Jean-Louis Loday for useful discussions on this topic  he had just a few days before the tragedy. We would like to thank Alexander Voronov, Mikhail Kapranov and Max Karoubi for useful discussions. We would also like to thank the referee for her/his comments.
\end{ack}


\section{The algebra $J$}
\medskip

We recall the algebra $J$ of \cite{FT} what is called $\fgl(\infty)$ in \cite{JM}. 
We also recall some of its basic properties that will be useful for further studies in this paper. 

We adapt the convention to indicate that $J$, regarded as a Lie algebra, would be denoted as $\fg J$, and when it is viewed as an associative algebra, it would be denoted by $J$. 

Unless otherwise stated, every object is defined over a field $k$ of characteristic $0$.

\subsection{Definition} \label{sect_def-Jacobi}
Let $R$ be an associative unital $k$-algebra. 
As an $R$-module, $J(R)$ is spanned by matrices indexed over $\Z$:
\[ J(R)=\{ \,(m_{i,j})_{i,j \in \Z}\, \vert \,m_{i,j} \in R, \; \exists\,\, N \,\, \text{s.t.} \,\, m_{i,j}=0 \quad (\text{$\forall\, i,j$} \,\, \text{s.t.} \,\, \vert i-j\vert>N)\, \}.
\]
With the standard operations on matrices, $J(R)$ has a structure of associative algebra. The usual Lie bracket $[A,B]:=AB-BA$ on $J(R)$ is well-defined, and we shall denote it by $\fg J(R)$, whenever we regard it as a Lie algebra. When $R=k$ itself, we shall write $J$ and $\fg J$, for simplicity.  \\

\noindent \textbf{N.B.} \hspace{0.1in}
The reader should notice that $\fg J(R)$ is {\sl not isomorphic} to $\fg J \otimes_k R$; the latter is a proper subalgebra in $\fg J(R)$ !

\medskip

\subsection{Remark on affine Lie subalgebras}\label{sect_sub-affine}
For each $n \in \Z_{>0}$, the subalgebra of the matrices $(a_{i,j})$ with the properties $a_{i+n,j+n}=a_{i,j}$ for any $i,j$ will be denoted by $J_n(R)$.
The algebra $J_n(R)$ viewed as Lie algebra, denoted by $\fg J_n(R)$, is isomorphic to $\fgl_n(R)\otimes_k k[t,t^{-1}]$. Indeed, an isomorphism is given as follows.
For each $1\leq i,j\leq n$ and $p \in \Z$, set $e_{i,j}(p)=\sum_{r \in \Z}e_{i+rn, j+(p+r)n}$. We have
\[
[e_{i,j}(p),e_{k,l}(q)]
=\delta_{j,k}e_{i,j}(p+q)-\delta_{i,l}e_{k,j}(p+q).
\]
It is clear that $\{e_{i,j}(p)\}_{p \in \Z}$ forms a basis of $J_n(R)$ and the $R$-linear map defined by $e_{i,j}(p) \mapsto e_{i,j}\otimes t^p$ is an isomorphism of Lie algebras.

\subsection{$\fg J(R)$ versus $\fgl_n(J(R))$}\label{sect_gJ-gln(J)}
Let us  show now that the Lie algebras $\fg J(R)$ and $\fgl_n(J(R))$ are isomorphic for any $n \in \Z_{>1}$.  For any integer, say $x \in \Z$, let $x_0 \in \{ 1,2,\ldots, n\}$ be such that $x-x_0 \in n\Z$ and let $x' \in \Z$ such that $x=x_0+nx'$.

Define a $k$-linear isomorphism $\Phi_n: \fg J(R) \longrightarrow \fgl_n(J(R))$ as follows. For $\bar{i}, \bar{j} \in \Z/n\Z$, we choose $i,j \in \{1,2,\ldots, n\}$ their representatives in $\Z$. Define the map $\Phi_n: \fg J(R) \rightarrow \fgl_n(J(R))$ by $\Phi_n(M)=(M_{\bar{i},\bar{j}})_{\bar{i}, \bar{j} \in \Z/n\Z}$, where the $(r,s)$-component of the matrix $M_{\bar{i},\bar{j}}$ is given by
\[ (M_{\bar{i},\bar{j}})_{r,s}=m_{i+rn, j+sn}. 
\]
By direct computation, it can be verified that the map $\Phi_n$ is an isomorphism of Lie algebras:
\begin{lemma}\label{lemma_isom_gJ} For any $n \in \Z_{>1}$, we have
\[ \fg J(R) \cong \fgl_n(J(R)) \]
as Lie algebras.
\end{lemma}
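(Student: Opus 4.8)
The plan is to reduce the statement to a purely associative one. Since the bracket on both $\fg J(R)$ and $\fgl_n(J(R))$ is the commutator attached to an associative product, it suffices to prove that the $k$-linear map $\Phi_n$ is an isomorphism of associative algebras from $J(R)$ onto the matrix algebra $M_n(J(R))$ underlying $\fgl_n(J(R))$; compatibility with the Lie bracket, $\Phi_n([A,B]) = [\Phi_n(A),\Phi_n(B)]$, is then automatic from linearity and multiplicativity. Conceptually $\Phi_n$ is nothing but the reindexing induced by the bijection $\Z \xrightarrow{\sim} \{1,\dots,n\}\times\Z$, $x\mapsto(x_0,x')$, which splits a $\Z$-index into its residue mod $n$ and its quotient; organizing the rows and columns of a matrix in $J(R)$ according to this bijection turns it into an $n\times n$ array of $\Z\times\Z$ blocks.

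First I would check that $\Phi_n$ is well-defined, i.e. that each block $M_{\bar i,\bar j}$ genuinely lies in $J(R)$. If $M=(m_{i,j})$ has band-width $N$, then $(M_{\bar i,\bar j})_{r,s}=m_{i+rn,\,j+sn}$ vanishes unless $|(i-j)+(r-s)n|\le N$; since $|i-j|\le n-1$ for representatives $i,j\in\{1,\dots,n\}$, this bounds $|r-s|$, so each block is again band-limited. Running the same computation backwards shows that the obvious reindexing $(M_{\bar i,\bar j})_{\bar i,\bar j}\mapsto M$, with $m_{i+rn,\,j+sn}:=(M_{\bar i,\bar j})_{r,s}$, sends a band-limited family of blocks to a band-limited matrix and is a two-sided inverse of $\Phi_n$. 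Hence $\Phi_n$ is a $k$-linear bijection.

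The heart of the matter is multiplicativity, $\Phi_n(MM')=\Phi_n(M)\,\Phi_n(M')$, which I would verify by unwinding both sides at the level of the $(r,s)$-entry of the $(\bar i,\bar j)$-block. On the one hand,
\[
\big((\Phi_n(M)\,\Phi_n(M'))_{\bar i,\bar j}\big)_{r,s}
= \sum_{\bar l\in\Z/n\Z}\sum_{t\in\Z} m_{i+rn,\,l+tn}\,m'_{l+tn,\,j+sn},
\]
while on the other hand $\big((\Phi_n(MM'))_{\bar i,\bar j}\big)_{r,s}=(MM')_{i+rn,\,j+sn}=\sum_{u\in\Z}m_{i+rn,\,u}\,m'_{u,\,j+sn}$. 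The two agree because, as $\bar l$ runs over a set of representatives of $\Z/n\Z$ and $t$ over $\Z$, the index $u=l+tn$ runs over $\Z$ exactly once; all sums are finite by the band-width condition.

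The only real obstacle is bookkeeping: one must keep the normalization $x=x_0+nx'$ with $x_0\in\{1,\dots,n\}$ consistent throughout, so that the pairing $(\bar l,t)\leftrightarrow u=l+tn$ is a genuine bijection and no index is double-counted or missed. Once this reindexing of the summation is in place, multiplicativity, and hence the Lie-algebra isomorphism, drops out; preservation of the unit (mapping $\id_{J(R)}$ to the identity matrix) is an immediate special case, so $\Phi_n$ is even a unital associative isomorphism.
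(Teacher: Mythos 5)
Your proof is correct and follows essentially the same route as the paper: it uses the very same map $\Phi_n$ induced by the splitting $x=x_0+nx'$, and simply carries out the ``direct computation'' that the paper leaves to the reader (well-definedness via band-width, bijectivity via the inverse reindexing, and multiplicativity via the bijection $(\bar l,t)\leftrightarrow u=l+tn$). Observing that it suffices to check that $\Phi_n$ is an isomorphism of associative algebras, so that compatibility with the commutator bracket is automatic, is a clean way to organize that computation.
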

Hence, to compute the homologies $H_\bullet(\fg J(R), k)$, it suffices to compute $H_\bullet(\fgl_\infty(J(R)), k)$, where $\fgl_\infty(\cR)$, for an associative unital $k$-algebra $\cR$, is the inductive limit of the directed family defined by \\
\centerline{$\iota_{m,n}: \fgl_m(\cR) \hookrightarrow \fgl_n(\cR); \; A=(a_{i,j})_{1\leq i,j\leq m} \mapsto \widetilde{A}=(\tilde{a}_{i,j})_{1\leq i,j\leq n}$, }

\noindent for $m<n$,  where we set
\[ \tilde{a}_{i,j}=\begin{cases} a_{i,j} \qquad &1\leq i,j\leq m, \\
                                               0 \qquad & \text{otherwise}. \end{cases}
\]
Hence, we have the next corollary due to B. Feigin and B. Tsygan:
\begin{cor}[Lemma 1 in \cite{FT}]\label{cor_isom-Hom-gl-J} 
 $H_\bullet(\fg J(R),k) \cong H_\bullet(\fgl_\infty(J(R)),k)$.
\end{cor}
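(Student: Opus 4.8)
The plan is to combine Lemma~\ref{lemma_isom_gJ} with the fact that Lie algebra homology commutes with filtered colimits. First I would note that the Chevalley--Eilenberg complex $\Lambda^\bullet(-)$ and the homology functor $H_\bullet(-,k)$ both commute with filtered colimits of Lie algebras over $k$; applying this to the defining presentation $\fgl_\infty(J(R))=\varinjlim_n \fgl_n(J(R))$ gives
\[ H_\bullet(\fgl_\infty(J(R)),k)\;\cong\;\varinjlim_n H_\bullet(\fgl_n(J(R)),k), \]
where the colimit runs along the maps $(\iota_{n,n+1})_\ast$ induced by the corner inclusions.

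By Lemma~\ref{lemma_isom_gJ}, for each $n\geq 2$ the isomorphism $\Phi_n$ yields $H_\bullet(\fgl_n(J(R)),k)\cong H_\bullet(\fg J(R),k)$, so every term of the system (from $n=2$ on) is abstractly $H_\bullet(\fg J(R),k)$. It therefore suffices to prove that the system stabilizes, i.e. that each transition map $(\iota_{n,n+1})_\ast$ is an isomorphism for $n\geq 2$; the colimit then collapses to a single term and the corollary follows.

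I expect this last point to be the main obstacle. The corner inclusion $\iota_{n,n+1}$ is far from surjective, and for a general coefficient algebra $\cR$ the maps $\fgl_n(\cR)\hookrightarrow\fgl_{n+1}(\cR)$ induce isomorphisms on $H_i$ only in a stable range $n\gg i$, so one cannot expect the full system to stabilize on its own. The decisive feature of $J(R)$ is its matrix self-similarity: the isomorphism underlying $\Phi_m$ is an isomorphism of \emph{associative} algebras $J(R)\cong M_m(J(R))$, and applying the functor $\fgl_n(-)$ produces Lie algebra isomorphisms $\mu_n^{(m)}\colon\fgl_n(J(R))\xrightarrow{\sim}\fgl_n(M_m(J(R)))=\fgl_{nm}(J(R))$. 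I would use these to boost any prescribed homological degree into the stable range. A direct check of the matrix formulas shows that the square with horizontal maps $\iota_{n,n+1}$ and $\iota_{nm,(n+1)m}$ and vertical maps $\mu_n^{(m)},\mu_{n+1}^{(m)}$ commutes; the vertical maps are isomorphisms, hence homology isomorphisms, and for $m$ chosen large enough that $nm$ lies in the stable range for the given degree the bottom map is a homology isomorphism, being a composite of stable corner inclusions. Hence $(\iota_{n,n+1})_\ast$ is an isomorphism in that degree for every $n$, which is exactly the stabilization needed.

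In summary the steps are: (i) homology commutes with filtered colimits; (ii) identify each term through Lemma~\ref{lemma_isom_gJ}; (iii) establish stabilization of the transition maps. Step~(iii) is the crux, and the essential input is the self-similarity $J(R)\cong M_m(J(R))$, which upgrades the degreewise (range-dependent) homological stability of $\fgl_n$ into genuine isomorphisms of all transition maps; without this special property of $J(R)$ the two homologies would in general differ.
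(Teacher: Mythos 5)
Your proof is correct, and it is substantially more explicit than what the paper itself offers: after Lemma~\ref{lemma_isom_gJ} and the definition of $\fgl_\infty(J(R))$ as the colimit along corner inclusions, the paper simply writes ``Hence'' and states the corollary (attributing it to Feigin--Tsygan), leaving the passage to the limit unjustified. The implicit argument behind that ``Hence'' is the degreewise one, and it is worth noting that it is shorter than yours: since Lie algebra homology commutes with filtered colimits, and since by the Loday--Quillen--Tsygan stability theorem (characteristic $0$; see \cite{LQ} and Chapter 10 of \cite{Lod}) the corner inclusions induce isomorphisms $H_i(\fgl_n(\cR))\cong H_i(\fgl_\infty(\cR))$ once $n$ is large with respect to $i$, one fixes a degree $i$, picks any $n\geq \max(i,2)$, and composes the abstract isomorphism $H_i(\fg J(R))\cong H_i(\fgl_n(J(R)))$ of Lemma~\ref{lemma_isom_gJ} with the stable-range isomorphism $H_i(\fgl_n(J(R)))\cong H_i(\fgl_\infty(J(R)))$; no stabilization of the individual transition maps is required for the statement as phrased, which only claims a graded isomorphism. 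Your step (iii) is nevertheless sound: the map underlying $\Phi_m$ is indeed an isomorphism of associative algebras $J(R)\cong M_m(J(R))$, your square does commute on the nose (with the block ordering of indices, ``corner inclusion then blow-up'' equals ``blow-up then $m$ successive corner inclusions''), and the conclusion that \emph{every} $(\iota_{n,n+1})_\ast$, $n\geq 2$, is an isomorphism is strictly stronger than what the degreewise argument gives. What this buys you is a map-level refinement absent from the paper: the single Lie algebra morphism $\iota\circ\Phi_2\colon \fg J(R)\to \fgl_\infty(J(R))$ induces the isomorphism on all of $H_\bullet$, which is exactly the kind of statement one wants later when the Hopf algebra structure and its primitive part on $H_\bullet(\fgl_\infty(J(R)))$ (Theorem~\ref{thm_LQ-T}) are transported back to $H_\bullet(\fg J(R))$ in \S\ref{sect_main}. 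Both routes rest on the same external input, the characteristic-zero homological stability of $\fgl_n$, which your write-up should cite explicitly.
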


In the rest of this article, we omit $k$ in the coefficients of homologies, if there seems to have no risk of confusion.
\medskip

\section{Homology of the Lie algebra $\fgl_\infty(J(R))$}\label{sect_principle}
\medskip

In this section, we relate the homology of $\fgl_\infty(J(R))$ with Hochschild homology of the algebra $J(R)$, where $R$ is an associative unital $k$-algebra. We also recall two useful relations between Hochschild homology and cyclic homology.
Unless otherwise stated, every object is defined over a field $k$ of characteristic $0$.

\subsection{Primitive part of $H_\bullet(\fgl_\infty)$}
We briefly recall a result due to B. L. Tsygan \cite{T} and independently 
by J. Loday and D. Quillen \cite{LQ}. \\

Let $\cR$ be an associative unital (maybe non-commutative) $k$-algebra. They relate the homology of $\fgl_\infty(\cR)$ with the cyclic homology of $\cR$ as follows. \\
 
A key step is show that the Chevalley-Eilenberg complex $(C_\bullet(\fgl_n(\cR),k),\partial)$ and its $\fg_n(k)$-coinvariants $(C_\bullet(\fgl_n(\cR),k)_{\fgl_n(k)},\partial)$ are quasi-isomorphic. This shows that $H_\bullet(\fgl_n(\cR),k)$ has a commutative and cocommutative DG-Hopf algebra structure. 

Since the field $k$ is of characteristic $0$, each $C_q(\fgl_n(\cR),k)$ ($q \in \Z_{>0}$) is semisimple which allows us to identify the coinvariants $C_\bullet(\fgl_n(\cR),k)_{\fgl_n(k)}$ with the invariants $C_\bullet(\fgl_n(\cR),k)^{\fgl_n(k)}$. 

Due to these facts, the primitive part of $H_\bullet(\fgl_n(\cR),k)$ can be computed
with the aid of  the first fundamental theorem on the $\fgl_n(k)$-invariants of $V^{\otimes q} \otimes (V^\ast)^{\otimes q}$ with $V=k^n$ the natural representation of $\fgl_n(k)$. The result is the following.
\begin{thm}[cf. \cite{LQ} and \cite{T}] \label{thm_LQ-T}
The primitive part of the Hopf algebra $H_\bullet(\fgl_\infty(\cR))$ is isomorphic to the cyclic homology group $HC_{\bullet-1}(\cR)$.
\end{thm}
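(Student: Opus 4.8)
The plan is to exploit the Hopf algebra structure on $H_\bullet(\fgl_\infty(\cR))$ already indicated above, and to reduce the computation of its primitive part to a problem in classical invariant theory. The block-sum maps $\fgl_m(\cR)\oplus\fgl_{m'}(\cR)\hookrightarrow\fgl_{m+m'}(\cR)$ endow $H_\bullet(\fgl_\infty(\cR))$ with a graded-commutative, graded-cocommutative Hopf algebra structure; since $k$ has characteristic $0$, a Milnor--Moore type argument identifies this Hopf algebra with the free graded-commutative algebra on its space of primitive elements. Consequently it suffices to determine the primitives, and for this I would use the quasi-isomorphism (recalled above) between the Chevalley--Eilenberg complex $(C_\bullet(\fgl_n(\cR),k),\partial)$ and its $\fgl_n(k)$-coinvariants. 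The point is that the reductive subalgebra $\fgl_n(k)\subseteq\fgl_n(\cR)$ acts on homology through inner derivations, hence trivially; reductivity and semisimplicity in characteristic $0$ then let me replace the full complex by the invariant subcomplex $(\Lambda^\bullet\fgl_n(\cR))^{\fgl_n(k)}$ without changing the homology.

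Next I would compute these invariants explicitly. As a $\fgl_n(k)$-module one has $\fgl_n(\cR)\cong (V\otimes V^\ast)\otimes_k\cR$ with $V=k^n$ and $\cR$ carrying the trivial action, so $\Lambda^q\fgl_n(\cR)$ is built from copies of $V^{\otimes q}\otimes(V^\ast)^{\otimes q}$ tensored with factors of $\cR$. The first fundamental theorem for $\fgl_n(k)$ then asserts that, in the stable range $n\geq q$, the space of invariants of $V^{\otimes q}\otimes(V^\ast)^{\otimes q}$ is spanned by the iterated trace contractions indexed by permutations $\sigma\in\fS_q$, and that these are linearly independent. Passing to the limit $n\to\infty$, the invariant attached to $\sigma$ contracts the $\cR$-entries along the cycles of $\sigma$: each cycle $(i_1\,i_2\,\cdots\,i_\ell)$ produces a cyclic product $\Tr(r_{i_1}r_{i_2}\cdots r_{i_\ell})$, so that the cyclic symmetry forced by the trace is exactly what brings cyclic homology into the picture. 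Under the Hopf structure, invariants supported on several disjoint cycles are decomposable, so the primitive part is detected by the \emph{connected} invariants, namely those attached to a single $q$-cycle.

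It remains to match the induced differential on the connected invariants with the cyclic boundary. I would show that, after the degree bookkeeping, the complex of single-cycle invariants with the differential inherited from $\partial$ is isomorphic (up to sign) to the complex $C_\bullet^\lambda(\cR)$ computing cyclic homology, with a shift by one: a single $q$-cycle, being an element of $\Lambda^q\fgl_\infty(\cR)$, lives in homological degree $q$ but corresponds to a cyclic word of length $q$, i.e. to $HC_{q-1}(\cR)$. This yields the isomorphism $\textrm{Prim}\,H_q(\fgl_\infty(\cR))\cong HC_{q-1}(\cR)$. The main obstacle is precisely this last identification: one must track the signs produced by the Koszul rule in $\Lambda^\bullet$ through the trace contractions, verify that the surviving boundary terms assemble into the Connes operator $b$ on cyclic chains, and confirm that the homological shift is exactly one. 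The passage to invariants and the first fundamental theorem are robust, so the delicate point is the compatibility of differentials together with the correct shift.
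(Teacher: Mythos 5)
Your proposal is correct and follows essentially the same route as the paper, which itself only sketches this argument (quasi-isomorphism of the Chevalley--Eilenberg complex with its $\fgl_n(k)$-coinvariants, passage to invariants by semisimplicity in characteristic $0$, the first fundamental theorem identifying stable invariants with permutations, and the single-cycle/connected invariants yielding the cyclic complex shifted by one) before deferring the details to \cite{LQ}, \cite{T} and Chapters 9--10 of \cite{Lod}. The sign bookkeeping and the matching of the induced differential with the Connes boundary, which you rightly flag as the delicate point, is exactly what is carried out in those references.
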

See, e.g., Chapter 9 and 10 of \cite{Lod}, for detail.

\subsection{Connes' Periodicity Exact Sequence}\label{sect_periodicity}
Let $\cR$ be an associative unital $k$-algebra. There exists the next long exact sequence:
\begin{equation}\label{long_cyclic-Hoch}
\cdots \longrightarrow HH_n(\cR) \longrightarrow HC_n(\cR) \longrightarrow HC_{n-2}(\cR) \longrightarrow HH_{n-1}(\cR) \longrightarrow \cdots
\end{equation}
This is Theorem 2.2.1 in \cite{Lod}. The same statement is also given in \cite{T}. \\
\subsection{Connes' Bicomplex}\label{sect_bicomplex}
Here, we recall so-called Connes' bicomplex $\cB(\cR)$ for an associative unital $k$-algebra $\cR$.  {\sl Connes' bicomplex} $\cB(\cR)$ is the bicomplex with 
$\cB(\cR)_{p,q}=\cR^{\otimes q-p+1}$ if $q\geq p$ and $0$ otherwise:
\[
\UseTips
\newdir{ >}{!/-5pt/\dir{>}}
\xymatrix @=1pc @*[r]
{
     \ar[dd] & &\ar[dd] & & \ar[dd] &&  & & \\
              & &         & &          & &          & & \\
      \cR^{\otimes 3} \ar[dd]_b
      & & \ar[ll]_{\;\;\;\;\;~B_{\cR}} \cR^{\otimes 2} \ar[dd]_{b}
      & & \ar[ll]_{\;\;\;\;\;~B_{\cR}} \cR 
      & & & &   \\
              &&           &&           &&         &&   \\
      \cR^{\otimes 2} \ar[dd]_b 
      & & \ar[ll]_{\;\;\;\;\;~B_{\cR}} \cR
      & & 
      & &  & &  \\
              &&           &&           &&         && \\
      \cR& &  & & & &  & &   \\
}
\]
where $b$ is the standard boundary operator of the Hochschild complex and 
$B_{\cR}:\cR^{\otimes p+1} \rightarrow \cR^{\otimes p+2}$ is the $k$-linear map given by
\begin{align*}
B_{\cR}(r_0\otimes r_1\otimes \cdots \otimes r_p)=
&\sum_{i=0}^p (-1)^{pi}\left( 1\otimes r_i \otimes  \cdots \otimes  r_p \otimes r_0 \otimes \cdots \otimes r_{i-1} \phantom{\frac{dy}{dx}}\right. \\
&\left. \phantom{\frac{dy}{dx}(-1)^{pi}}  + r_i\otimes 1\otimes r_{i+1}\otimes \cdots \otimes  r_p \otimes  r_0\otimes  \cdots \otimes r_{i-1} \right) . \\
\end{align*}
Notice that they satisfy $b \circ B_{\cR}+B_{\cR} \circ b=0$. 
It is known (cf. Theorem 2.1.8 of \cite{Lod}) that the homologies of the total complex of this bicomplex is the cyclic homologies $HC_\bullet(\cR)$ of $\cR$.
The homology of this total complex can be also computed with the aid of the spectral sequence. By definition, its $E^1$-term is given by
\[ E_{p,q}^1=\begin{cases} \; HH_{q-p}(\cR) \qquad & q>p\geq 0, \\
                                           \; 0 \qquad & \text{otherwise},
                                           \end{cases} 
\]
and the $d^1$-map is given by the induced map $(B_{\cR})_\ast$ that can be identified with the de Rham differential when $\cR$ is commutative (cf. Proposition 2.3.3 in \cite{Lod}). 

\subsection{Summary}
Thanks to 
Lemma \ref{lemma_isom_gJ},  the homologies $H_\bullet(\fg J(R))$ are isomorphic to \\
$H_\bullet(\fgl_n(J(R)))$ for any $n \in \Z_{>1}$, in particular, we may set $n=\infty$. It turns out that $H_\bullet(\fgl_\infty(J(R)))$ admits a structure of Hopf algebra whose primitive part $\mathrm{Prim} (H_\bullet(\fgl_\infty(J(R))))$ is given by the cyclic homologies $HC_{\bullet-1}(J(R))$ by Theorem \ref{thm_LQ-T}. The sections \ref{sect_periodicity} and \ref{sect_bicomplex} indicate that if we can compute the Hochschild homologies $HH_\bullet(J(R))$, it may give us a way to determine the cyclic homologies $HC_{\bullet-1}(J(R))$. \\

In the rest of this article, we calculate the Hochschild homologies $HH_\bullet(J(R))$.
Here and after, we denote the boundary operators of Hochschild complex by $b$ that is defined as follows: for $p \in \Z_{>0}$, 
\begin{align*}
& b(r_0\otimes r_1\otimes \cdots \otimes r_p) \\
=
&\sum_{i=0}^{p-1}(-1)^{i} r_0 \otimes \cdots r_{i-1} \otimes r_ir_{i+1} \otimes \cdots \otimes r_p +(-1)^p r_pr_0\otimes r_1\otimes \cdots \otimes r_{p-1}. 
\end{align*}
 
\bigskip

\section{Hochschild-Serre type Spectral sequence}
\medskip

In this section, we explain briefly the spectral sequence obtained by D. Stefan \cite{St}. This spectral sequence is a generalization of the well-known Hochschild-Serre spectral sequence. 

Unless otherwise stated, every object is defined over a field $k$ of characteristic $0$.

\subsection{Smash Product and Examples}\label{sect_general-set-up}
Let $A$ be an associative algebra and $H$ a Hopf algebra which acts on $A$ as endomorphism.  Let $\varphi: H \rightarrow \End_k(A)$ be a morphism of $k$-algebras satisfying
\begin{equation}\label{prop-varphi}
 \varphi(h)(a_1a_2)=\sum_i (\varphi(h_{(1)})a_1)(\varphi(h_{(2)})a_2), \qquad 
 \varphi(h)(1)=\varepsilon(h)1, 
\end{equation}
where $\Delta(h)=h_{(1)} \otimes h_{(2)}$ is the Sweedler notation and $\varepsilon: H \rightarrow k$ is the counit.
Let $C$ be the {\sl smash product} $A \sharp H$ of $A$ and $H$ (cf. \cite{Sw}). That is,  we define the product structure on  $C:=A \otimes H$ by
\[ (a_1 \otimes h_1)(a_2 \otimes h_2)= a_1 (\varphi((h_1)_{(1)})a_2)\otimes (h_1)_{(2)}h_2, \]
where $\Delta(h_1)=(h_1)_{(1)} \otimes (h_1)_{(2)}$. This defines an associative structure on $C$. We remark that the algebra $C$ is an $H$-comodule algebra,  namely, it has the comodule structure given by $\Delta_C: C \rightarrow C \otimes H; a \otimes h \mapsto (a \otimes h_{(1)}) \otimes h_{(2)}$, and this map is an morphism of algebras.  \\

\medskip 

A typical example of such algebras is given by a so-called \emph{twisted group algebra}, which is defined as follows. Let $A$ be an associative unital $k$-algebra and $G$ a discrete subgroup of $k$-automorphisms of $A$. The group algebra $H=k[G]$ has a natural Hopf algebra structure with $\Delta(g)=g \otimes g$ for $g \in G$. In this case, the smash product $A\sharp H$ is the so-called twisted group algebra, denoted by $A\{G\}$ in this note. Here are two examples of  twisted group algebras: 
let $R$ be an associative unital $k$-algebra. \\

\medskip

\noindent{1)} \hspace{0.1 in} $A=A(R)=R^n=\{(a_{1},\ldots, a_{n})) \vert a_i \in R\}$, with the componentwise product structure and $H=k[\Z/n\Z]$. For $\bar{i} \in \Z/n\Z$ (we may identify it with an integer $i \in [0,n-1]$), we set $\varphi(\bar{i})(a_{1},\ldots, a_{n})=(a_{i+1}, \ldots, a_{i+n})$ where the indices are interpreted modulo $n$. \\
For $(a_1,\ldots, a_n), (b_1,\ldots, b_n) \in A(R)$ and $\bar{i},\bar{j}\in \Z/n\Z$, we have
\[
((a_1,\ldots, a_n) \otimes \bar{i})((b_1,\ldots, b_n) \otimes \bar{j}) \\
=
(a_1b_{i+1},\ldots, a_nb_{i+n}) \otimes \overline{i+j}.
\]
One sees that the algebra $C=C(R)$ is isomorphic to the algebra of $n\times n$-matrices $M_n(R)$. In fact, the isomorphism is given by
\[ (a_1,\ldots, a_n) \otimes \bar{i} \qquad \longmapsto \qquad \sum_{k=1}^n a_k e_{k,k+i}, \]
where $e_{r,s}$ is the matrix element with $1$ in the $(r,s)$-component. \\

\medskip

\noindent{2)} \hspace{0.1 in} $A=A(R)=\prod_{i \in \Z} Re_i$, where $e_i$'s are orthogonal idempotents, and $H=k[\Z]$. Here, the morphism $\varphi$ is defined by $\varphi(1)=\tau \in \End_k(A)$ where $\tau(e_i)=e_{i-1}$ ($i \in \Z$). Then, the algebra $C=C(R)$ is isomorphic to the algebra $J(R)$ via the isomorphism $e_i \otimes \tau^p \mapsto e_{i,i+p}$ for any $p \in \Z$.  Let
\[ J^{\off}(R)=\{\,(m_{i,j}) \in J(R)\, \vert \, m_{i,j} \neq 0 \quad \Rightarrow \quad i\neq j\,\} \]
be the $A(R)$-bimodule of off-diagonal part of $J(R)$. As $A(R)$-bimodule, $J(R) =A(R) \oplus J^{\off}(R)$. This description will be used in \S \ref{sect_Hochschild-computation}. Here and after, for any ring $R$, in place of saying $R$-$R$ bimodule, we shall say $R$-bimodule for simplicity, unless we consider $R$-$S$ bimodule with two different rings $R$ and $S$. 

\subsection{Stefan's Spectral Sequence}
Let us briefly recall the spectral sequence, i.e., Theorem 4 of \cite{FT}, in a down to earth manner, inspired by D. Stefan \cite{St}.
\smallskip

Having the Hochschild-Serre spectral sequence in mind, one might guess that the Hochschild homology of $C=A \sharp H$ with coefficients in a $C$-bimodule 
can be described in terms of the Hochschild homologies of $A$ and $H$.
This is what happens that was proved by D. Stefan \cite{St} in a slightly general form.
Here, we recall the result in a weaker form that is sufficient for our purpose:
\begin{thm}[Theorem 4.5 in \cite{St}]\label{thm_HS-spectral-seq}Let $M$ be a $C$-bimodule. Then, there exists a convergent spectral sequence such that
\[ E_{p,q}^2 =H_p(H,H_q(A,M)) \qquad \Longrightarrow \qquad H_{p+q}(C,M).
\]
\end{thm}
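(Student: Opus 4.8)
The plan is to construct the spectral sequence from a double complex that resolves the Hochschild homology of $C=A\sharp H$ while separating the contributions of $A$ and $H$. Recall that the smash product $C$ is an $H$-comodule algebra via $\Delta_C$, and this coaction is the structural feature we exploit. First I would take the bar resolution computing $H_\bullet(C,M)$, namely the complex $M\otimes C^{\otimes \bullet}$ with the Hochschild differential $b$. The key observation is that because $C=A\otimes H$ as a $k$-module and the multiplication only twists the $A$-factor by $\varphi$ applied to the coproduct of the $H$-factors, each tensor power $C^{\otimes p}$ decomposes along the $H$-grading supplied by the comodule structure. This lets me introduce a filtration on the Hochschild complex of $C$ by the ``$H$-degree'', i.e.\ by how many tensor slots of $H$ are recorded after pushing the $A$-slots together.

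\medskip

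Concretely, I would filter $F_p\bigl(M\otimes C^{\otimes n}\bigr)$ by the number of $H$-tensor factors that survive, so that the associated graded object in bidegree $(p,q)$ records $q$ Hochschild degrees coming from $A$-multiplications and $p$ degrees from $H$. The differential $b$ splits into a part that contracts two adjacent $A$-entries (leaving the $H$-bookkeeping fixed) and a part that contracts along the $H$-structure. Writing out $b$ on an element $(a_0\otimes h_0)\otimes\cdots\otimes(a_n\otimes h_n)$ and using the smash-product rule $(a_1\otimes h_1)(a_2\otimes h_2)=a_1(\varphi((h_1)_{(1)})a_2)\otimes (h_1)_{(2)}h_2$, one checks that the leading piece of the differential on $F_p/F_{p-1}$ is exactly the Hochschild differential of $A$ with coefficients in the $A$-bimodule $M$, twisted by the $\varphi$-action encoded in the surviving $H$-slots. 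Taking homology in the $A$-direction therefore yields, on the $E^1$-page, the groups $H_q(A,M)$ assembled into the bar complex of $H$ acting on these homology groups. The residual $d^1$-differential is induced by the $H$-contractions, and identifying it as the bar differential of $H$ gives the $E^2$-page $E^2_{p,q}=H_p\bigl(H,H_q(A,M)\bigr)$, where $H_q(A,M)$ carries its natural $H$-module structure inherited from the $H$-action on $A$ and on $M$.

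\medskip

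The hard part will be two matters of bookkeeping that are easy to state but delicate to verify. First, one must show that the $A$-differential genuinely descends to the associated graded and that its homology is computed by the ordinary Hochschild complex of $A$ with coefficients in $M$ viewed as an $A$-bimodule --- this requires checking that the twists by $\varphi((h)_{(1)})$, $\varphi((h)_{(2)})$ reassemble correctly via the coassociativity and the compatibility conditions \eqref{prop-varphi}, in particular $\varphi(h)(a_1a_2)=\sum_i(\varphi(h_{(1)})a_1)(\varphi(h_{(2)})a_2)$. Second, one must identify the induced $H$-module structure on $H_q(A,M)$ and confirm that the surviving differential on $E^1$ is precisely the bar differential computing $H_p(H,-)$; this is where the counit relation $\varphi(h)(1)=\varepsilon(h)1$ enters, guaranteeing that degenerate slots behave as they must. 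Convergence is then automatic: the filtration is bounded below and exhaustive in each total degree (the $H$-degree cannot exceed the homological degree), so the spectral sequence converges to $H_{p+q}(C,M)$ as claimed. Since we only need the statement in this weaker form, I would not reprove Stefan's more general result, referring instead to \cite{St} for the full treatment and presenting the construction above as the concrete mechanism behind Theorem~\ref{thm_HS-spectral-seq}.
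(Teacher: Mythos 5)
The first thing to note is that the paper offers no proof of this statement at all: it is imported wholesale from Stefan (\cite{St}, Theorem 4.5), and the only material the paper adds around it is the explicit description of the $H$-bimodule structure on $H_\bullet(A,M)$ (left actions $h.a=\varphi(h)(a)$ on $A$ and $h.m=h_{(1)}mS(h_{(2)})$ on $M$, right actions through the counit $\varepsilon$) needed to make sense of the $E^2$-term. Your closing move --- deferring to \cite{St} --- therefore matches the paper's treatment; but the ``concrete mechanism'' you present alongside it is not Stefan's (his argument is a composite-functor, i.e.\ Grothendieck, spectral sequence: the coinvariants functor factors as $H_0(C,-)=H_0(H,-)\circ H_0(A,-)$ on $C$-bimodules, and one verifies that $H_0(A,-)$ carries projective $C$-bimodules to $H_0(H,-)$-acyclic $H$-modules), and as written your mechanism has genuine gaps.

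First, your filtration is not well defined. For a general Hopf algebra $H$ the coaction $\Delta_C:C\to C\otimes H$ is \emph{not} a grading --- comodule structures are gradings only when $H$ is a group algebra, as it happens to be in this paper's application $H=k[\Z]$, but the theorem is stated for arbitrary $H$ --- and on $M\otimes(A\otimes H)^{\otimes n}$ every tensor slot contains an $H$-factor, so ``the number of $H$-tensor factors that survive'' does not cut out a subspace. The filtration that does work is by the number of slots lying outside the subalgebra $A\otimes 1$, namely $F_p=\sum_{\vert S\vert\leq p}M\otimes D_1\otimes\cdots\otimes D_n$ with $D_i=C$ for $i\in S$ and $D_i=A\otimes 1$ otherwise; its associated graded then carries $A\otimes\ker\varepsilon$ in the distinguished slots, not bare copies of $H$. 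Second, the Hochschild differential does not ``split'' into an $A$-part and an $H$-part: every face map multiplies both factors simultaneously via $(a_i\otimes h_i)(a_{i+1}\otimes h_{i+1})=a_i(\varphi((h_i)_{(1)})a_{i+1})\otimes(h_i)_{(2)}h_{i+1}$, and the separation you invoke appears only on the associated graded of a correctly chosen filtration. Finally, the identification of $E^1$ as the bar complex of $H$ with coefficients in $H_q(A,M)$ --- including the fact that the relevant $H$-action involves the antipode, $h.m=h_{(1)}mS(h_{(2)})$, which your sketch never produces, and the K\"unneth-type step needed to pull $H_q(A,M)$ out of each row --- is the actual content of the theorem, not bookkeeping; it is precisely what \cite{St} proves. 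So either drop the sketch and cite \cite{St}, as the paper does, or replace it by the corrected filtration and carry out the $E^1$ and $E^2$ identifications in full.
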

In particular, the case when $M=C$ was stated as Theorem 4 in \cite{FT}, but their description contains some gaps and the proof is not given. \\
\smallskip

For the sake of reader's convenience, let us describe the $H$-bimodule structure on $H_\bullet(A,M)$ in the case when $H$ is cocommutative. \\
The left $H$-module structures are described as follows: for $h \in H$, 
\begin{enumerate}
\item on $A$: \quad $h.a:=\varphi(h)(a)$ \qquad $a \in A$, 
\item on $H$: \quad $h.m:=h_{(1)}mS(h_{(2)})$ \qquad $m \in M$.
\end{enumerate}
The right module structure on $A$ and $M$ are given by the counit $\varepsilon$ of $H$.  The $H$-bimodule structure on the Hochschild complex $C_\bullet(A,M)$ is defined as an appropriate tensor product of these modules. It can be checked that the action commutes with the boundary operator $b$ on $C_\bullet(A,M)$. Thus, this induces an $H$-bimodule structure on $H_\bullet(A,M)$. Moreover, one can also verify that $H_0(H,H_0(A,M))\cong H_0(C,M)$ as in the proof of Proposition 4.2 in \cite{St}. Thus, the $E^2$-term in Theorem \ref{thm_HS-spectral-seq} should be calculated with respect to this induced $H$-bimodule structure on $H_\bullet(A,M)$. 

Now, let $R$ be an associative unital $k$-algebra. In our case, we have $A=A(R)=\prod_{i \in \Z} Re_i$ with orthogonal idempotents $\{e_i\}_{i \in \Z}$, i.e., $e_ie_j=\delta_{i,j}e_i$ and $H=k[\Z]=k[\tau^{\pm 1}]$ where $\tau$, as an element of $\mathrm{Aut}_k(A)$, is realized as $\tau(e_i)=e_{i-1}$. Hence, we can use the spectral sequence: $E_{p,q}^2=H_p(k[\Z], H_q(A(R),J(R))) \; \Rightarrow\; HH_{p+q}(J(R))$.

\bigskip

\section{Computations on $H_\bullet(\prod_{i\in \Z} R e_i, J(R))$}\label{sect_Hochschild-computation}
\medskip

In this section we compute the homologies $H_\bullet(A(R), J(R))$ with $A(R)=\prod_{i\in \Z} R e_i$, where $R$ is an associative unital $k$-algebra. See \S \ref{sect_general-set-up} for the other notations.


\subsection{$H_0(A(R),J(R))$}
By definition, $H_0(A(R),J(R))=J(R)/[A(R),J(R)]$. For $D=\diag(d_i) \in A(R)$ and $M=(m_{i,j})\in J(R)$, one has $[D,M]_{i,j}=d_im_{i,j}-m_{i,j}d_j$, i.e., 
\[ \mathrm{Im} \,b\vert_{J(R) \otimes A(R)}=\{M=(m_{i,j}) \in J(R) \vert m_{i,i}\in [R,R] ~ \forall\, i\}, 
\]
which implies 
\begin{equation}\label{eq-Hochschild-AJ}
H_0(A(R),J(R)) \cong A(R^{ab}),
\end{equation} 
where we set $R^{ab}:=R/[R,R]=HH_0(R)$. 
\subsection{$HH_p(A(R))$ ($p>0$)}\label{sect_A-higher}
We recall Theorem 9.1.8 of \cite{W} which states that for an $R_i$-bimodule $M_i$ ($i=1,2$), where $R_i$ are associative $k$-algebras, one has
\begin{equation}\label{eq-prod-Hochschild}
 H_\bullet(R_1\times R_2, M_1\times M_2) \cong H_\bullet(R_1,M_1)\oplus H_\bullet(R_2,M_2). 
\end{equation}

For $N\in \Z_{\geq 0}$, set $A_N(R) =\prod_{\vert i\vert \leq N} Re_i$. Then for any $M>N$, the canonical projection $A_M(R) \twoheadrightarrow A_N(R)$ is surjective and it induces a surjection between the Hochschild complices $C_\bullet(A(R),A_M(R))$ and $C_\bullet(A(R),A_N(R))$ that is also surjective. Hence, the {\sl Mittag-Leffler condition} 
(cf. see, e.g., \cite{KS} or \cite{W}) for $\{C_\bullet(A(R),A_N(R))\}_{N \in \Z_{\geq 0}}$
is satisfied. 

It follows from \eqref{eq-prod-Hochschild} that $H_\bullet(A(R),A_N(R)) \cong HH_p(A_N(R)) \cong \prod_{\vert i\vert \leq N} HH_p(R)e_i$. It follows that the Mittag-Leffler condition for $\{H_\bullet(A(R),A_N(R))\}_{N \in \Z_{\geq 0}}$ is also satisfied. 
Thus, by Proposition 1.12.4 of \cite{KS} or Theorem 3.5.8 of \cite{W}, it follows that 
$HH_\bullet(A(R)) \cong \underset{\longleftarrow_N}{\lim} HH_\bullet(A_N(R))$. 
Therefore, we obtain
\begin{equation}\label{eq-Hochschild-A}
 HH_\bullet(A(R)) \cong \prod_{i \in \Z} HH_\bullet(R)e_i.
\end{equation}

\begin{rem}\label{rem_R=k-I} Let us suppose that $R=k$. 
So, we compute $HH_\bullet(ke)$ for an idempotent $e$, i.e., $e^2=e$. By definition, for $r>0$, 
\[ b(e\otimes e^{\otimes r})=\begin{cases} 0 \qquad &r \equiv 1 (2), \\
                                                                  e\otimes e^{\otimes (r-1)} \qquad & r \equiv 0 (2), \end{cases} \]
which implies $HH_0(ke) =ke$ and $HH_p(ke)=0$ for $p>0$.
Hence, \eqref{eq-Hochschild-A} implies
\[
 HH_p(A(k)) \cong \begin{cases} A(k) \qquad &p=0, \\ 0 \qquad & p>0. \end{cases}
\]
\end{rem}
\smallskip

\subsection{$H_p(A(R),J^{\off}(R))$} We first observe that $R e_{i,j} \in J^{\off}(R)$ for any $i\neq j \in \Z$ is an $A(R)$-bimodule. \\

Let $I_1,I_2$ be subsets of $\Z$ satisfying i) $i \in I_1$ and $j \in I_2$, ii) $I_1 \cup I_2=\Z$ and iii) $I_1 \cap I_2=\emptyset$.  For any subset $I \subset \Z$, we set $A(R)_I=\prod_{i \in I} Re_i$ and $e_I=\sum_{i \in I} e_i$. It is clear that $e_I$ is the unit of $A(R)_I$. Moreover, the subalgebra $S=ke_{I_1}\oplus ke_{I_2}$ of $A(R)$ is separable over $k$, since we can take $e_{I_1} \otimes e_{I_1}+e_{I_2} \otimes e_{I_2}$ as an idempotent. Thus, by Theorem \ref{thm_non-comm-Hochschild}, we have
\[ H_p(A(R), Re_{i,j}) \cong H_p^S(A(R),Re_{i,j}). \]
But since $S$ is central, it follows that $A(R)^{\otimes_S p}=A(R)_{I_1}^{\otimes_S p} \oplus A(R)_{I_2}^{\otimes_S p}$. This implies that $C_p^S(A(R), Re_{i,j})=0$, thus we obtain $H_p(A(R),Re_{i,j})=0$.

Now, let $\{I_i\}_{i \in \Z_{>0}}$ be an increasing sequence (, i.e, $I_i \subset I_{i+1}$ for any $i$) of finite subsets of $\Z$ such that $\bigcup_i I_i =\Z$. Set $J(R)_i=\bigoplus_{p \in \Z \setminus \{0\}} \left(\prod_{j \in I_i}Re_j\right)\otimes k \tau^p$. By definition, $J(R)_i$ is an $A(R)$-bimodule and $\underset{\longleftarrow}{\lim}_{i} J(R)_i=J^{\off}(R)$. Moreover, both $\{J(R)_i\}_{i \in \Z_{>0}}$ and $\{H_\bullet(A(R),J(R)_i\}_{i \in \Z_{>0}}$ satisfy the Mittag-Leffler condition. Thus, by Proposition 1.12.4 of \cite{KS} or Theorem 3.5.8 of \cite{W}, it follows that 
\begin{equation}\label{eq-Hochschild-A-J(1)}
H_p(A(R), J^{\off}(R))=0 \qquad \forall \; p>0. 
\end{equation}

Combining \eqref{eq-Hochschild-AJ}, \eqref{eq-Hochschild-A} and \eqref{eq-Hochschild-A-J(1)}, we obtain the following result.
\begin{prop}\label{prop_Homology-A}
We have $H_\bullet(A(R),J(R))\cong \prod_{i \in \Z} HH_\bullet(R)e_i.$  
\end{prop}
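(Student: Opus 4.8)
The plan is to assemble Proposition \ref{prop_Homology-A} from the three computations already carried out in this section, using the bimodule decomposition $J(R)=A(R)\oplus J^{\off}(R)$ recorded in \S\ref{sect_general-set-up}. Since Hochschild homology $H_\bullet(A(R),-)$ with fixed coefficient algebra is additive in the bimodule argument, I would first observe
\[
 H_\bullet(A(R),J(R)) \cong H_\bullet(A(R),A(R)) \oplus H_\bullet(A(R),J^{\off}(R)).
\]
This reduces the statement to understanding the two summands separately, and each has essentially been handled already.

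For the diagonal summand $H_\bullet(A(R),A(R))=HH_\bullet(A(R))$, I would simply invoke \eqref{eq-Hochschild-A}, which gives $HH_\bullet(A(R)) \cong \prod_{i\in\Z} HH_\bullet(R)e_i$. This already produces the full product appearing on the right-hand side of the Proposition. For the off-diagonal summand, I would invoke \eqref{eq-Hochschild-A-J(1)} in positive degrees, so that $H_p(A(R),J^{\off}(R))=0$ for all $p>0$, and note that in degree $0$ the off-diagonal part contributes nothing to $A(R^{ab})$: indeed \eqref{eq-Hochschild-AJ} already identifies $H_0(A(R),J(R))\cong A(R^{ab})=\prod_{i\in\Z} HH_0(R)e_i$, which is exactly the degree-$0$ part of the claimed answer. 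Hence in every degree the off-diagonal block vanishes and only the diagonal block survives, matching $\prod_{i\in\Z} HH_\bullet(R)e_i$ degree by degree.

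The one point requiring a moment of care — and the step I expect to be the main obstacle — is the compatibility of the additive splitting with the two distinct inverse-limit arguments used for the two summands. The diagonal block was computed as an inverse limit over the truncations $A_N(R)=\prod_{|i|\le N}Re_i$, whereas the off-diagonal block was computed as an inverse limit over the bimodules $J(R)_i$ indexed by an exhausting sequence of finite subsets $I_i\subset\Z$. To glue the two pieces into a single statement about $J(R)$, I would verify that the direct-sum decomposition $J(R)=A(R)\oplus J^{\off}(R)$ is respected by a compatible system of truncations whose inverse limit recovers $J(R)$ as an $A(R)$-bimodule, and that the relevant Mittag-Leffler conditions hold for the combined system (they do, since each summand satisfies it and finite direct sums of Mittag-Leffler systems are Mittag-Leffler). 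Granting this, the inverse limit commutes with the additive splitting of Hochschild homology, and the Proposition follows immediately by combining \eqref{eq-Hochschild-AJ}, \eqref{eq-Hochschild-A} and \eqref{eq-Hochschild-A-J(1)} as indicated.
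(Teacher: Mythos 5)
Your proof is correct and follows essentially the paper's own route: the paper's entire proof is the one-line combination of \eqref{eq-Hochschild-AJ}, \eqref{eq-Hochschild-A} and \eqref{eq-Hochschild-A-J(1)} through the $A(R)$-bimodule splitting $J(R)=A(R)\oplus J^{\off}(R)$ and additivity of $H_\bullet(A(R),-)$ in the coefficient bimodule, exactly as you do. The compatibility issue you flag in your final paragraph is not a genuine obstacle: the splitting already holds at the chain level, $C_\bullet(A(R),J(R))=C_\bullet(A(R),A(R))\oplus C_\bullet(A(R),J^{\off}(R))$, and since \eqref{eq-Hochschild-A} and \eqref{eq-Hochschild-A-J(1)} are established facts about the homology of each summand separately, no interaction between the two inverse-limit arguments is ever required.
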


In the next section, we shall thus compute the homologies $H_\bullet(k[\Z],\prod_{i \in \Z} HH_\bullet(R)e_i)$. 

\bigskip

\section{Computations of $H_\bullet(k[\Z],\prod_{i \in \Z} HH_\bullet(R)e_i)$}\label{sect_H(H,A)}
\medskip

Let us recall its set up. Our $k[\Z]$ is generated by the matrix $\tau=\sum_{i \in \Z} e_{i,i+1} \in J$ which acts on $M_\bullet(R):=\prod_{i \in \Z}HH_\bullet(R) e_i$ from the left by conjugation, i.e., $\tau(\sum_{i}m_i e_i)=\sum_i m_i e_{i-1}$. The right module structure should be given by the counit, i.e., $\tau \mapsto \, \text{the multiplication by}\, 1$. This $k[\Z]$-bimodule structure is the same as is given in \cite{St}. 

Since the global dimension of a principal ideal domain, which is not a field,  is $1$, it suffices to compute the homologies $\{H_p(k[\Z],M_\bullet(R))\}_{p \in \Z_{\geq 0}}$ for $p=0$ and $1$. \\

\subsection{$H_0(k[\Z],M_\bullet(R))$}

The boundary map $b:M_\bullet(R)\otimes k[\Z] \rightarrow M_\bullet(R)$ is given by $e_i \otimes \tau^p \mapsto e_i-e_{i-p}$. Hence, for any $\sum_{i \in \Z}m_i e_i \in M_\bullet(R)$, setting 
\[ \tilde{m}_i =\begin{cases} \sum_{0<r\leq i}m_r \qquad & i>0, \\
                                 \qquad 0 \qquad &i=0, \\
                                 -\sum_{i<r\leq 0}m_r \qquad &i<0, \end{cases}
\]
it can be checked that 
\[   b(\sum_i \tilde{m}_i e_i\otimes \tau^{-1})=\sum_i \tilde{m}_i (e_i-e_{i+1})=\sum_i(\tilde{m}_i-\tilde{m}_{i-1})e_i=\sum_i m_ie_i.
\]
This implies that $\mathrm{Im}\,b\vert_{M_\bullet(R)\otimes k[\Z]}=M_\bullet(R)$. Thus we obtain the following.

\begin{lemma} $H_0(k[\Z],M_\bullet(R))=0$.
\end{lemma}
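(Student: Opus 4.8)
The plan is to identify $H_0(k[\Z], M_\bullet(R))$ with the cokernel of the degree-one Hochschild boundary $b: M_\bullet(R)\otimes k[\Z] \to M_\bullet(R)$ and then to show that $b$ is surjective. From the bimodule structure recalled above — the left action being conjugation, $\tau\cdot e_i = e_{i-1}$, and the right action being given by the counit, so that $e_i\cdot\tau^p = \varepsilon(\tau^p)e_i = e_i$ — the formula $b(e_i\otimes\tau^p) = e_i\cdot\tau^p - \tau^p\cdot e_i = e_i - e_{i-p}$ follows immediately. Since $H_0(k[\Z], M_\bullet(R)) = M_\bullet(R)/\mathrm{Im}\,b$, the entire lemma reduces to the single assertion $\mathrm{Im}\,b = M_\bullet(R)$.

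For surjectivity I would exhibit, for an arbitrary element $\sum_{i\in\Z} m_i e_i \in M_\bullet(R)$, an explicit preimage. The special case $b(e_i\otimes\tau^{-1}) = e_i - e_{i+1}$ is a telescoping increment, which suggests taking a discrete antiderivative: one defines coefficients $\tilde m_i$ by partial sums, anchored at $\tilde m_0 = 0$, so as to arrange $\tilde m_i - \tilde m_{i-1} = m_i$ for every $i$. A short computation then gives
\[ b\Bigl(\sum_i \tilde m_i e_i \otimes \tau^{-1}\Bigr) = \sum_i \tilde m_i(e_i - e_{i+1}) = \sum_i (\tilde m_i - \tilde m_{i-1})e_i = \sum_i m_i e_i, \]
which is the desired element. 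Checking that the chosen $\tilde m_i$ satisfy the increment relation across the three ranges $i>0$, $i=0$, $i<0$ is routine and I would not dwell on it.

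The point I expect to be the real crux — and the structural reason the homology vanishes rather than reproducing $HH_\bullet(R)$ — is that the antiderivative $\sum_i \tilde m_i e_i$ must be a genuine element of the coefficient module. This is precisely where the \emph{direct product} structure of $M_\bullet(R) = \prod_{i\in\Z} HH_\bullet(R)e_i$ is indispensable: the sequence $(\tilde m_i)$ has infinite support in general, so it would fail to lie in the direct sum $\bigoplus_i HH_\bullet(R)e_i$, yet it is an honest element of the product. I would emphasize this contrast explicitly, since it is the conceptual heart of the argument; the algebra itself is elementary. Granting this, $b$ is surjective and hence $H_0(k[\Z], M_\bullet(R)) = 0$.
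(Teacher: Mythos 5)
Your proof is correct and follows essentially the same route as the paper's: the same identification of $H_0$ with the cokernel of $b$, the same discrete antiderivative $\tilde m_i$ (partial sums anchored at $\tilde m_0=0$), and the same telescoping computation $b\bigl(\sum_i \tilde m_i e_i \otimes \tau^{-1}\bigr)=\sum_i m_i e_i$. Your added emphasis that the antiderivative lives in the direct product $\prod_{i\in\Z} HH_\bullet(R)e_i$ (and would not lie in the direct sum) is a correct and worthwhile observation that the paper leaves implicit.
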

\subsection{$H_1 (k[\Z],M_\bullet(R))$}

For $m=\sum_{i \in \Z}m_ie_i \in M_\bullet(R)$ and $p \in \Z$,  set 
\[ m[p]=\tau^p(m)=\sum_i m_ie_{i-p}=\sum_i m_{i+p}e_i. 
\]
The next lemma is technical, but it  simplifies the rest of the computation.
\begin{lemma} Any element of $H_1(k[\Z],M_\bullet(R))$ is represented in the form $m \otimes \tau$ for some $m \in M_\bullet(R)$. 
\end{lemma}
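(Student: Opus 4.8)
The plan is to work directly in the (unnormalized) Hochschild bar complex $C_p = M_\bullet(R) \otimes k[\Z]^{\otimes p}$ with the bimodule structure recalled above, in which the right action of $\tau$ is trivial (via the counit, so $m \cdot \tau^a = m$) and the left action is the shift $\tau^b . m = m[b]$. A general $1$-chain is a finite $k$-linear combination $\sum_p m_p \otimes \tau^p$, so it suffices to show that every generator $m \otimes \tau^p$ is congruent, modulo $\Image(b \colon C_2 \to C_1)$, to a chain of the shape $m' \otimes \tau$; summing the finitely many resulting contributions then collapses the chain to a single term $m \otimes \tau$, and if the chain we started from was a cycle, the homologous chain $m \otimes \tau$ is again a cycle representing the same class.

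First I would compute the degree-$2$ boundary. Using $m \cdot \tau^a = m$ and $\tau^b . m = m[b]$ one obtains
\[
 b(m \otimes \tau^a \otimes \tau^b) = m \otimes \tau^b - m \otimes \tau^{a+b} + m[b] \otimes \tau^a,
\]
which yields the basic congruence $m \otimes \tau^{a+b} \equiv m \otimes \tau^b + m[b] \otimes \tau^a \pmod{\Image b}$. Taking $a=b=0$ gives $m \otimes 1 \equiv 0$, so all $\tau^0$-terms may be discarded.

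Next I would peel off the powers one at a time. For $p \geq 1$, the choice $a=1$, $b=p-1$ gives $m \otimes \tau^p \equiv m \otimes \tau^{p-1} + m[p-1] \otimes \tau$, and an immediate induction produces
\[
 m \otimes \tau^p \equiv \Big( \sum_{j=0}^{p-1} m[j] \Big) \otimes \tau.
\]
For negative powers I would first extract $m \otimes \tau^{-1} \equiv -\, m[-1] \otimes \tau$ from the relation with $a=-1$, $b=1$ (using $m \otimes 1 \equiv 0$), and then run the analogous downward induction to rewrite every $m \otimes \tau^{-p}$ as a single $\otimes\tau$-term as well. Collecting the contributions shows that each generator, hence every $1$-chain, is homologous to some $m \otimes \tau$, which is the assertion.

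The argument is entirely routine once the degree-$2$ boundary is available; the only point needing a word of care is that $M_\bullet(R) = \prod_{i \in \Z} HH_\bullet(R)e_i$ is a direct product, so one should check that the reductions remain inside $M_\bullet(R)$. This is harmless: each reduction uses only a finite sum of shifts $m[j]$ of a fixed element, and the outer sum over $p$ is finite, so the resulting $m$ is a bona fide element of $M_\bullet(R)$. I expect this bookkeeping — rather than any conceptual difficulty — to be the main thing to get right, which is precisely why the statement is flagged as technical.
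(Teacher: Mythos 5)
Your proposal is correct and follows essentially the same route as the paper: the same boundary computation $b(m\otimes\tau^a\otimes\tau^b)=m\otimes\tau^b-m\otimes\tau^{a+b}+m[b]\otimes\tau^a$, the same observation that $m\otimes 1\equiv 0$, and the same inductive reduction of each $m\otimes\tau^p$ to a single $\otimes\,\tau$-term. The only (immaterial) difference is that for negative powers you run a downward induction, while the paper flips $m\otimes\tau^{-p}\equiv -m[-p]\otimes\tau^{p}$ to the positive case in one step via $q=-p$.
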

\begin{proof} First of all, remark that $m \otimes 1 \equiv 0$ in $H_1(k[\Z],M_\bullet(R))$ for any $m \in A$, since $b(m\otimes 1 \otimes 1)=m\otimes 1$.
Now, for $p,q \in \Z$, one has
\[ b(m\otimes \tau^p \otimes \tau^q)=m\otimes \tau^q-m\otimes \tau^{p+q}+m[q]\otimes \tau^p, \]
which implies, setting $q=1$, that
\[ m \otimes \tau^p\equiv (m+m[1]+\cdots +m[p-1])\otimes \tau  \qquad \text{in} \qquad H_1(k[\Z],M_{\bullet}(R))\]
for $p>0$ (by induction) and, setting $q=-p$, that
\[ m\otimes \tau^{-p}\equiv -m[-p]\otimes \tau^p \qquad \text{in} \qquad H_1(k[\Z],M_{\bullet}(R)).
\]
for any $p>0$. 
\end{proof}

\begin{rem} \label{rem_H1}
We have seen that in $H_1(k[\Z], M_\bullet(R))$ one has
\[ m \otimes \tau^p\equiv \begin{cases} 
(\sum_{k=0}^{p-1} m[k]) \otimes \tau \qquad &p>0, \\
\qquad 0 \qquad  &p=0, \\
-(\sum_{k=p}^{-1} m[k])\otimes \tau \qquad &p<0.
\end{cases}
\]
\end{rem}

Hence, we may restrict ourselves to consider the elements of type $m\otimes \tau$ ($m \in M_\bullet(R)$).
\medskip
By definition, we have $b(m \otimes \tau)=m-m[1]$, which implies that $m \otimes \tau \in \Ker\, b$ iff $m=m[1]$, i.e., $m \in HH_\bullet(R)(\sum_{i \in \Z}e_i)$. Hence, we see that there is a surjective map $HH_\bullet(R)(\sum_{i \in \Z} e_i)\otimes \tau \twoheadrightarrow H_1(k[\Z],M_\bullet(R))$. This map is also injective, since any element of $M_\bullet(R) \otimes k[\Z] \otimes k[\Z]$ is a linear combination of the elements of the form $m\otimes \tau^p \otimes \tau^q$, and
its image by $b$ can be computed as
\[
b(m\otimes \tau^p \otimes \tau^q)=
m\otimes \tau^q-m\otimes \tau^{p+q}+m[q]\otimes \tau^p \equiv 0 \otimes \tau \in H_1(k[\Z],M_\bullet(R))
\]
by Remark \ref{rem_H1}. Thus, we obtain
\begin{prop}\label{prop_Homology-k[Z]-A} $H_1(k[\Z], M_\bullet(R)) \cong HH_\bullet(R)$ and $H_p(k[\Z],M_\bullet(R))=0$ for $p\neq 1$.
\end{prop}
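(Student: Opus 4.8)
The plan is to first dispose of all degrees $p \neq 0,1$ at no cost, and then pin down $H_1$ using the two preceding lemmas. Since $k[\Z]=k[\tau^{\pm1}]$ is a principal ideal domain that is not a field, it has global dimension $1$, so $H_p(k[\Z],M_\bullet(R))=0$ for every $p\geq 2$; combined with the already-established vanishing $H_0(k[\Z],M_\bullet(R))=0$, this reduces the entire proposition to the single isomorphism $H_1(k[\Z],M_\bullet(R))\cong HH_\bullet(R)$.

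For $H_1$ I would marry the representative lemma with the explicit cycle condition. By that lemma every class is represented by some $m\otimes\tau$, and since $b(m\otimes\tau)=m-m[1]$, such an element is a cycle precisely when $m=m[1]$, i.e.\ when $m=c\sum_{i\in\Z}e_i$ is constant for some $c\in HH_\bullet(R)$. This identifies the cycles of that form with $HH_\bullet(R)\bigl(\sum_{i}e_i\bigr)\cong HH_\bullet(R)$ and produces a surjection $HH_\bullet(R)\bigl(\sum_{i}e_i\bigr)\otimes\tau\twoheadrightarrow H_1(k[\Z],M_\bullet(R))$. Note the product structure $M_\bullet(R)=\prod_i HH_\bullet(R)e_i$ is exactly what makes $\sum_i e_i$ a legitimate element, so the ``constant'' diagonal copy is well defined.

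It then remains to prove injectivity, which is where the genuine content lies: one must show that no nonzero constant cycle $c\bigl(\sum_i e_i\bigr)\otimes\tau$ is a boundary. The strategy is to take an arbitrary degree-$2$ chain, a combination of the elementary tensors $m\otimes\tau^p\otimes\tau^q$, apply $b(m\otimes\tau^p\otimes\tau^q)=m\otimes\tau^q-m\otimes\tau^{p+q}+m[q]\otimes\tau^p$, and rewrite each of the three terms as a multiple of $\otimes\tau$ via Remark \ref{rem_H1}. Using $m[q][k]=m[q+k]$, the three sums $\sum_{k=0}^{q-1}m[k]$, $\sum_{k=0}^{p+q-1}m[k]$ and $\sum_{k=q}^{p+q-1}m[k]$ telescope and cancel, so every boundary reduces to $0\otimes\tau$; hence the surjection above is an isomorphism.

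I expect this final telescoping cancellation to be the main obstacle, not because it is deep but because it is the one place requiring care: the three cases of Remark \ref{rem_H1} (according as $p+q$ is positive, zero, or negative) must all be combined correctly so that every boundary is seen to vanish after reduction to $\otimes\tau$ form. A conceptually cleaner alternative that bypasses this bookkeeping is to compute $H_\bullet(k[\Z],M_\bullet(R))$ directly from the length-one free bimodule resolution $0\to\cR^e\xrightarrow{\tau\otimes1-1\otimes\tau}\cR^e\to\cR\to0$ of $\cR=k[\tau^{\pm1}]$: tensoring with $M_\bullet(R)$ collapses the homology to the single map $m\mapsto m[1]-m$, whose kernel is the constant sequences (yielding $H_1\cong HH_\bullet(R)$), whose cokernel vanishes (yielding $H_0=0$), and which forces $H_p=0$ for $p\geq2$ automatically.
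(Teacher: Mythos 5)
Your main argument is correct and coincides with the paper's own proof step by step: kill $p\geq 2$ via the global dimension of the PID $k[\Z]$, quote the vanishing of $H_0$, represent every class by $m\otimes\tau$, identify the cycles among these with the constant sequences $HH_\bullet(R)\bigl(\sum_{i\in\Z}e_i\bigr)$, and prove injectivity by checking that every boundary $b(m\otimes\tau^p\otimes\tau^q)$ reduces to $0\otimes\tau$; the telescoping identity you isolate (using $m[q][k]=m[q+k]$, with the sign conventions of Remark \ref{rem_H1} handling $p+q$ positive, zero, or negative uniformly) is exactly what the paper compresses into the phrase ``$\equiv 0\otimes\tau$ by Remark \ref{rem_H1}'' --- and, as you implicitly do, this reduction should be read as a well-defined linear map on $1$-chains that kills boundaries and fixes the constant cycles, so that the conclusion is not circular while $H_1$ is still being computed. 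The alternative you sketch at the end is a genuinely different route that the paper does not take, and it is arguably cleaner: since $\cR=k[\tau^{\pm 1}]$ admits the length-one free bimodule resolution $0\to\cR\otimes\cR^{\op}\xrightarrow{\,\tau\otimes 1-1\otimes\tau\,}\cR\otimes\cR^{\op}\to\cR\to 0$, the Hochschild homology with coefficients in any bimodule is computed by the induced two-term complex, which for $M_\bullet(R)$ with the stated bimodule structure is the single map $m\mapsto m[1]-m$; its kernel is the constants (legitimate elements precisely because $M_\bullet(R)$ is a full product), its cokernel vanishes by the same partial-sum construction the paper uses for $H_0$, and the length of the resolution forces $H_p=0$ for $p\geq 2$. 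That version delivers all three assertions of the proposition simultaneously and avoids the bar-complex bookkeeping entirely, at the modest price of invoking the identification of Hochschild homology with $\mathrm{Tor}^{\cR\otimes\cR^{\op}}_\bullet(M_\bullet(R),\cR)$.
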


\bigskip


\section{Main results}\label{sect_main}
\medskip

We describe now the homology groups of $\fg J(R)$ over an arbitrary associative unital $k$-algebra $R$ that is not necessarily commutative. 

\subsection{Hochschild Homology of $J(R)$}
Theorem \ref{thm_HS-spectral-seq} together with Propositions \ref{prop_Homology-A} and \ref{prop_Homology-k[Z]-A} show that the Hochschild homologies of $J(R)$ are determined since the spectral sequence $E_{p,q}^2=H_p(k[\Z], H_q(A(R),J(R))) \; \Rightarrow\; HH_{p+q}(J(R))$ collapses at $E^2$ and it gives
\begin{thm} \label{thm-Hochschild-J(R)}
Suppose that $R$ is an associative unital $k$-algebra. Then, 
$HH_\bullet(J(R)) \cong HH_{\bullet-1}(R)$ as graded algebras.
\end{thm}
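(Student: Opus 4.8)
The plan is to feed the two homology computations of the previous sections into Stefan's spectral sequence (Theorem \ref{thm_HS-spectral-seq}) and to exploit the fact that the resulting $E^2$-page is concentrated in a single column. Concretely, with $A=A(R)=\prod_{i\in\Z}Re_i$ and $H=k[\Z]$, so that $C=J(R)$, the spectral sequence reads $E_{p,q}^2=H_p(k[\Z],H_q(A(R),J(R)))\Rightarrow HH_{p+q}(J(R))$. First I would substitute Proposition \ref{prop_Homology-A}, which identifies the coefficient module $H_q(A(R),J(R))$ with $M_q(R)=\prod_{i\in\Z}HH_q(R)e_i$. Then Proposition \ref{prop_Homology-k[Z]-A} applies to each $M_q(R)$ and yields $E_{1,q}^2\cong HH_q(R)$ together with $E_{p,q}^2=0$ for all $p\neq 1$.

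Because the entire $E^2$-page lives in the column $p=1$, every higher differential $d^r\colon E_{p,q}^r\to E_{p-r,q+r-1}^r$ with $r\geq 2$ has source or target in a zero column, hence vanishes; so the spectral sequence degenerates at $E^2$ and $E^2=E^\infty$. Reading off the abutment, the filtration on $HH_n(J(R))$ has exactly one nonzero associated graded piece, namely $E_{1,n-1}^\infty\cong HH_{n-1}(R)$. In particular there are no extension problems, and one obtains the graded vector space isomorphism $HH_n(J(R))\cong HH_{n-1}(R)$ for every $n$, i.e. $HH_\bullet(J(R))\cong HH_{\bullet-1}(R)$, with the degree shift reflecting the fact that the surviving row sits in $p=1$.

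The genuinely delicate point, and the one I expect to be the main obstacle, is upgrading this to an isomorphism of \emph{graded algebras}. For this I would first verify that Stefan's spectral sequence is multiplicative with respect to the products on Hochschild homology induced by the $H$-comodule algebra structure on $C=A\sharp H$; the relevant filtration is the one coming from the coaction $\Delta_C$, and one must check that it is compatible with the product, so that each page, and in particular $E^\infty$, inherits an algebra structure and that the induced product on $\mathrm{gr}\,HH_\bullet(J(R))$ coincides with the product on $E^\infty$. Granting this, the collapse in a single column means the product on the abutment is transported without correction terms to the product on $E_{1,\bullet}^\infty$.

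It then remains to identify this induced product on $E_{1,\bullet}^2=H_1(k[\Z],M_\bullet(R))$ with the shifted product on $HH_{\bullet-1}(R)$. Here I would use the explicit description from the proof of Proposition \ref{prop_Homology-k[Z]-A}: every class is represented by $m\otimes\tau$ with $m\in HH_\bullet(R)(\sum_{i\in\Z}e_i)$, so the extra homological degree is carried entirely by the exterior generator $\tau$ of $H_\bullet(k[\Z])\cong H_\bullet(\Z)$. Tracking how the product interacts with this generator is precisely what accounts for the shift $\bullet\mapsto\bullet-1$ at the level of the algebra structure. The careful bookkeeping of signs and of the degree shift induced by $\otimes\tau$ is the part that must be carried out in detail, but it becomes routine once the multiplicativity of the spectral sequence is in place.
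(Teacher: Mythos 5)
Your proposal follows exactly the paper's own route: Stefan's spectral sequence with $A=A(R)$, $H=k[\Z]$, $C=J(R)$, fed with Propositions \ref{prop_Homology-A} and \ref{prop_Homology-k[Z]-A}, collapsing at $E^2$ in the single column $p=1$ to give $HH_n(J(R))\cong HH_{n-1}(R)$. You are in fact more explicit than the paper on the degeneration, the absence of extension problems, and the multiplicativity issue (which the paper handles only implicitly, later exhibiting the explicit chain-level map $\widetilde{\Phi}_p$ of \eqref{eq-def-map-Phi-I} and citing \cite{HS1,HS2} for Proposition \ref{prop-isom-Phi}), so your outline is a correct rendering of the same argument.
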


For later purpose, let us provide an explicit isomorphism between $HH_p(R)$ and $HH_{p+1}(J(R))$.  Such a morphism is constructed via the composition $HH_p(R) \cong H_1(k[\Z],\prod_{i \in \Z}HH_p(R)e_i) \cong HH_{p+1}(R)$, where the second map is given by ''shuffle product'' (cf. \cite{EM} or \cite{M}). To be explicit, this isomorphism is induced from the morphism of abelian group
$\widetilde{\Phi}_p:R^{\otimes p+1} \rightarrow J(R)^{\otimes p+2}$ defined by
\begin{equation}\label{eq-def-map-Phi-I}
\widetilde{\Phi}_p(r_0\otimes r_1\otimes \cdots \otimes r_p)=
r_0 I\otimes \left( \sum_{k=0}^p (-1)^k r_1 I\otimes \cdots \otimes r_k I\otimes I \tau \otimes r_{k+1}I \otimes \cdots \otimes r_p I\right),
\end{equation}
for $p \in \Z_{\geq 0}$, where we set $I=\sum_{i \in \Z} e_i$ and $\tau \in k[\Z]$ is defined in the head of \S \ref{sect_H(H,A)}. Indeed, by direct computation, one obtains the next lemma :
\begin{lemma}\label{lemma_rel-b-Phi-I} For any $p \in \Z_{\geq 0}$, one has
$b \circ \widetilde{\Phi}_{p+1}+\widetilde{\Phi}_{p}\circ b=0$. 
\end{lemma}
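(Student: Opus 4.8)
The plan is to verify the identity by a direct computation on a generator $r_0 \otimes \cdots \otimes r_{p+1} \in R^{\otimes p+2}$, organizing everything around the behaviour of the constant diagonal elements $rI$ and the shift $\tau$ inside $J(R)$. The only facts about $J(R)$ that I will need are the three multiplicative identities $(rI)(sI)=(rs)I$, $I\tau=\tau$, and, crucially, $(rI)\tau=\tau(rI)=r\tau$ (the constant diagonal matrix $rI$ commutes with the shift $\tau$, because $r$ sits in every diagonal slot). These let me evaluate every face of the Hochschild differential $b$ applied to the terms of $\widetilde{\Phi}_{p+1}$.

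I write $\widetilde{\Phi}_{p+1}(r_0 \otimes \cdots \otimes r_{p+1})=\sum_{k=0}^{p+1}(-1)^k T_k$, where $T_k \in J(R)^{\otimes p+3}$ is the tensor obtained by inserting $I\tau$ just after the factor $r_k I$. Applying $b$ to each $T_k$, I sort the resulting faces into two families. Family (a) consists of the faces merging two neighbouring factors of the form $rI$ (including, for $0\le k\le p$, the cyclic wrap-around face, which merges $r_{p+1}I$ with $r_0 I$); by $(rI)(sI)=(rs)I$ these should reproduce, after re-indexing the insertion slot, the terms of $\widetilde{\Phi}_p$ applied to the faces of $b(r_0\otimes\cdots\otimes r_{p+1})$. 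Family (b) consists of the two faces adjacent to the inserted $I\tau$, each merging it with a neighbouring $rI$.

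For family (b) the computation telescopes. Writing $U_k$ for the result of merging $r_k I$ with $I\tau$ (producing a factor $r_k\tau$) and $V_k$ for the result of merging $I\tau$ with $r_{k+1}I$, the commutation identity $(I\tau)(r_{k+1}I)=(r_{k+1}I)(I\tau)=r_{k+1}\tau$ gives $V_k=U_{k+1}$. Collecting the sign $(-1)^k$ from $\widetilde{\Phi}_{p+1}$ against the face signs $(-1)^k$ and $(-1)^{k+1}$, the family-(b) contribution becomes $\sum_{k=0}^{p}(U_k-U_{k+1})+(U_{p+1}-U_0)$; the last pair comes from $T_{p+1}$, whose inserted $I\tau$ occupies the final slot, so that its right neighbour is reached through the wrap-around face, and $\tau(r_0 I)=r_0\tau$ identifies that term with $U_0$. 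The telescope and this end term cancel completely, so family (b) contributes $0$.

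It then remains to match family (a) with $-\widetilde{\Phi}_p\circ b$, which is the routine but delicate part: each face of $b$ merging $r_i$ with $r_{i+1}$, followed by an insertion of $I\tau$, must be identified with an insertion followed by a merge, and the accumulated signs — the insertion sign $(-1)^k$, the Hochschild face sign, and the shift of the insertion slot caused by the merge — must be shown to agree with the signs in $\widetilde{\Phi}_p(b(\cdots))$ up to a global minus. I expect this sign bookkeeping, together with the interaction of the cyclic wrap-around face with the inserted $\tau$, to be the main obstacle; a quick check at $p=0$, where both $b\circ\widetilde{\Phi}_1$ and $\widetilde{\Phi}_0\circ b$ evaluate to $\pm(r_0r_1-r_1r_0)I\otimes\tau$, pins down the global sign as $-$. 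Conceptually the statement is transparent: under the unital algebra homomorphism $R\to J(R)$, $r\mapsto rI$, the map $\widetilde{\Phi}$ is, up to a degree sign, the shuffle product with the Hochschild $1$-chain $I\otimes I\tau$, and the latter is a cycle since $b(I\otimes I\tau)=I\cdot I\tau-I\tau\cdot I=\tau-\tau=0$; the Leibniz rule for the shuffle product with a cycle then yields precisely $b\circ\widetilde{\Phi}_{p+1}+\widetilde{\Phi}_p\circ b=0$.
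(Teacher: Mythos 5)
Your proposal is correct and takes essentially the same approach as the paper, whose entire proof is the phrase ``by direct computation'': your organization of that computation --- telescoping cancellation of the two faces adjacent to the inserted $I\tau$ via $(rI)\tau=\tau(rI)=r\tau$, then matching the remaining faces with $-\widetilde{\Phi}_p\circ b$ --- is sound, and the sign bookkeeping you deferred does close (insert-after-$k$ then merge-at-$i$ corresponds to merge-then-insert with a uniform global minus, wrap-around faces included). Your concluding shuffle-product interpretation also agrees with the paper's own framing, since the paper introduces $\Phi_p$ precisely as coming from the shuffle product with the $1$-cycle $I\otimes I\tau$.
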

Hence, by this lemma, the morphism $\widetilde{\Phi}_p$ induces a morphism $\Phi_p: HH_p(R) \rightarrow HH_{p+1}(J(R))$ for any $p \in \Z_{\geq 0}$. 

The next proposition seems to be well-known  (cf. \cite{HS1} and \cite{HS2}):
\begin{prop}\label{prop-isom-Phi}
For any $p \in \Z_{\geq 0}$, the morphism $\Phi_p$ is an isomorphism.
\end{prop}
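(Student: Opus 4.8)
The plan is to prove that $\Phi_p$ is an isomorphism by showing it agrees, up to the identifications already established, with the composite of two maps each of which we have already proven to be an isomorphism. Recall from the discussion preceding Theorem \ref{thm-Hochschild-J(R)} that the isomorphism $HH_p(R) \cong HH_{p+1}(J(R))$ factors as
\[
HH_p(R) \xrightarrow{\ \sim\ } H_1\bigl(k[\Z], \textstyle\prod_{i \in \Z} HH_p(R)e_i\bigr) \xrightarrow{\ \sim\ } HH_{p+1}(J(R)),
\]
where the first arrow is the inverse of the isomorphism of Proposition \ref{prop_Homology-k[Z]-A} (sending a class in $HH_p(R)$ to the class of $m \otimes \tau$ with $m = c \cdot \sum_{i}e_i$ representing the given cycle), and the second is the edge map of the Stefan spectral sequence of Theorem \ref{thm_HS-spectral-seq}, which is an isomorphism because that spectral sequence collapses at $E^2$ by Propositions \ref{prop_Homology-A} and \ref{prop_Homology-k[Z]-A}. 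So it suffices to check that $\widetilde{\Phi}_p$ is a chain-level realization of this composite.

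First I would establish that $\widetilde{\Phi}_p$ does land in cycles and descends to homology: this is precisely the content of Lemma \ref{lemma_rel-b-Phi-I}, which gives $b\circ\widetilde{\Phi}_{p+1}+\widetilde{\Phi}_p\circ b=0$, so that $\widetilde{\Phi}_p$ carries $b$-cycles to $b$-cycles and $b$-boundaries to $b$-boundaries, inducing the stated $\Phi_p$ on homology. Next I would track the element $r_0\otimes\cdots\otimes r_p$ through the two isomorphisms above and compare with the explicit formula \eqref{eq-def-map-Phi-I}. The factor $r_0 I$ and the tensor factors $r_k I$ reflect the embedding of the Hochschild cycle of $R$ into $M_\bullet(R)$ via the diagonal idempotent sum $I=\sum_i e_i$, which is exactly the image $HH_\bullet(R)\bigl(\sum_i e_i\bigr)$ appearing in the proof of Proposition \ref{prop_Homology-k[Z]-A}; the single factor $I\tau$ inserted in all $p+1$ positions with alternating signs is the shuffle of the class $m\otimes\tau$ generating $H_1(k[\Z],-)$ against the $p$-cycle, which is the standard description of the shuffle (Eilenberg--Zilber) product realizing the second isomorphism (cf. \cite{EM}, \cite{M}). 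Identifying these two pieces shows $\widetilde{\Phi}_p$ computes the composite at the chain level.

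The main obstacle I anticipate is the bookkeeping in the second identification: one must verify that the alternating insertion of $I\tau$ in \eqref{eq-def-map-Phi-I} is genuinely the shuffle product of the degree-one class $I\otimes\tau$ with the Hochschild cycle $r_0 I\otimes r_1 I\otimes\cdots\otimes r_p I$, and that the resulting class corresponds under the collapsing spectral sequence's edge map to the generator of $H_1(k[\Z],M_p(R))$ tensored with the given class of $HH_p(R)$. Concretely, since the spectral sequence has $E^2_{p,q}$ concentrated in the single column $p=1$ by Proposition \ref{prop_Homology-k[Z]-A}, there is no room for differentials and the edge homomorphism $E^2_{1,q}\cong HH_{q+1}(J(R))$ is an isomorphism, so the only thing to check is that $\widetilde{\Phi}_p$ represents a nonzero class generating the appropriate $E^2$ entry and that it is functorial in the cycle chosen. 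Once the chain map is matched with the two established isomorphisms, bijectivity of $\Phi_p$ follows immediately; since both source and target are computed by the same $E^2=E^\infty$ term, no convergence or extension issue arises. I would therefore present the proof as: (i) invoke Lemma \ref{lemma_rel-b-Phi-I} to get $\Phi_p$ well-defined; (ii) interpret the formula \eqref{eq-def-map-Phi-I} as the shuffle against $I\otimes\tau$; (iii) conclude via Propositions \ref{prop_Homology-A} and \ref{prop_Homology-k[Z]-A} that this is the collapsing-spectral-sequence isomorphism, hence an isomorphism.
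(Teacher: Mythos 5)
Your proposal is correct and is essentially the paper's own argument: the paper gives no detailed proof of this proposition (it calls it ``well-known'', citing \cite{HS1} and \cite{HS2}), but the construction immediately preceding it presents $\widetilde{\Phi}_p$ precisely as a chain-level realization, via the shuffle product, of the composite $HH_p(R) \cong H_1(k[\Z],\prod_{i\in\Z}HH_p(R)e_i) \cong HH_{p+1}(J(R))$, with the first isomorphism coming from Proposition \ref{prop_Homology-k[Z]-A} and the second from the collapse of the spectral sequence of Theorem \ref{thm_HS-spectral-seq} guaranteed by Propositions \ref{prop_Homology-A} and \ref{prop_Homology-k[Z]-A}. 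Your three steps --- well-definedness via Lemma \ref{lemma_rel-b-Phi-I}, the shuffle interpretation of \eqref{eq-def-map-Phi-I}, and the identification with the single-column $E^2$ term --- simply make that sketch explicit.
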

\subsection{Homology of the Lie algebra $\fg J(k)$}
As we have seen in Remark \ref{rem_R=k-I}, it follows that $HH_0(k)=k$ and $HH_p(k)=0$ for $p>0$. Hence, by Theorem \ref{thm-Hochschild-J(R)},
we obtain 

\begin{cor}[Theorem 3 in \cite{FT}]\label{cor_FT-III}
$HH_1(J) \cong k$ and $HH_p(J)=0$ for any $p\neq 1$. 
\end{cor}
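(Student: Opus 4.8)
The plan is to deduce this as an immediate specialization of Theorem \ref{thm-Hochschild-J(R)} to the case $R=k$, combined with the computation of $HH_\bullet(k)$ already recorded in Remark \ref{rem_R=k-I}. First I would recall that $HH_0(k)=k$ while $HH_p(k)=0$ for every $p>0$; this is precisely the content of Remark \ref{rem_R=k-I} specialized to the idempotent computation, reflecting the fact that $k$ is smooth of dimension $0$ over itself.

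Next I would invoke Theorem \ref{thm-Hochschild-J(R)}, which asserts the graded isomorphism $HH_\bullet(J(R)) \cong HH_{\bullet-1}(R)$ for any associative unital $k$-algebra $R$. Setting $R=k$ yields the degreewise isomorphism $HH_p(J) \cong HH_{p-1}(k)$ for all $p \in \Z_{\geq 0}$, where $J=J(k)$.

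Finally I would read off the answer degree by degree. For $p=1$ one gets $HH_1(J) \cong HH_0(k) = k$. For $p=0$ the right-hand side is $HH_{-1}(k)=0$, since Hochschild homology vanishes in negative degrees. For $p>1$ the right-hand side is $HH_{p-1}(k)=0$ because $p-1>0$. Hence $HH_p(J)=0$ for every $p \neq 1$, which is the assertion.

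There is essentially no genuine obstacle at this stage: all the real work is already encapsulated in Theorem \ref{thm-Hochschild-J(R)}, whose proof---the collapse at $E^2$ of Stefan's spectral sequence, fed by Propositions \ref{prop_Homology-A} and \ref{prop_Homology-k[Z]-A}---has been carried out above. The only point demanding a moment's attention is the degree bookkeeping induced by the shift $HH_\bullet(J) \cong HH_{\bullet-1}(k)$, namely confirming that the single surviving class sits in degree $1$ and that no contribution leaks into degree $0$ or into degrees exceeding $1$.
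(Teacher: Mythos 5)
Your proposal is correct and is essentially identical to the paper's own proof: the paper deduces the corollary in one line from Remark \ref{rem_R=k-I} (which gives $HH_0(k)=k$ and $HH_p(k)=0$ for $p>0$) together with Theorem \ref{thm-Hochschild-J(R)} specialized to $R=k$. The degree-by-degree bookkeeping you spell out, including the vanishing in degree $0$ via $HH_{-1}(k)=0$, is exactly the implicit content of that deduction.
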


In this case, Corollary \ref{cor_FT-III} together with the Connes' periodicity exact sequence \eqref{long_cyclic-Hoch} in \S \ref{sect_periodicity} implies 
the next periodicity:
\begin{cor} $HC_p(J) \cong k$ for odd $p$ and $HC_p(J)=0$ for even $p$. 
\end{cor}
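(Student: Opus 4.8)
The plan is to feed the computation of $HH_\bullet(J)$ into Connes' periodicity exact sequence \eqref{long_cyclic-Hoch} and run a two-step induction on the degree. First I would record the two inputs. From Corollary \ref{cor_FT-III} (equivalently, from Theorem \ref{thm-Hochschild-J(R)} with $R=k$) one has $HH_1(J)\cong k$ and $HH_p(J)=0$ for every $p\neq 1$; note in particular that $HH_0(J)=0$. On the cyclic side I would use the standard edge facts that $HC_n(\cR)=0$ for $n<0$ and $HC_0(\cR)=HH_0(\cR)$, so that $HC_0(J)=0$.

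Next I would pin down the base cases directly from \eqref{long_cyclic-Hoch}. Around degree $1$ the relevant segment is $HC_0(J)\to HH_1(J)\to HC_1(J)\to HC_{-1}(J)$; since both outer groups vanish, the middle map is an isomorphism and $HC_1(J)\cong HH_1(J)\cong k$. Around degree $2$ the segment $HH_2(J)\to HC_2(J)\to HC_0(J)$ has both flanking terms zero, forcing $HC_2(J)=0$ (and $HC_0(J)=0$ is already in hand).

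For the inductive step I would exploit the periodicity map $S\colon HC_p(J)\to HC_{p-2}(J)$. For $p\geq 3$ the segment $HH_p(J)\to HC_p(J)\xrightarrow{S}HC_{p-2}(J)\to HH_{p-1}(J)$ has both Hochschild groups zero, because $p\neq 1$ and $p-1\neq 1$; hence $S$ is an isomorphism $HC_p(J)\cong HC_{p-2}(J)$. Descending by two from the base cases, all odd groups collapse onto $HC_1(J)\cong k$ and all even groups onto $HC_2(J)=HC_0(J)=0$, which is exactly the asserted periodicity.

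The argument is essentially bookkeeping, so there is no genuine analytic or combinatorial obstacle; what I expect to require the most care is the low-degree ledger. The crucial subtlety is remembering that $HH_0(J)=0$ (so that $HC_0(J)=0$ rather than $k$) and that the negative cyclic groups vanish, since it is precisely these boundary vanishings that turn the long exact sequence into the clean isomorphisms above. Once the correct $HH_\bullet(J)$ and these standard edge facts are in place, exactness of \eqref{long_cyclic-Hoch} does all the work.
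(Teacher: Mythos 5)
Your proof is correct and follows exactly the route the paper intends: the paper's own justification is the one-line remark that Corollary \ref{cor_FT-III} together with the periodicity exact sequence \eqref{long_cyclic-Hoch} yields the result, and your argument is precisely the bookkeeping that remark suppresses. Your careful handling of the low-degree anchors ($HC_0(J)=HH_0(J)=0$, vanishing in negative degrees) and the observation that $S$ is an isomorphism for $p\geq 3$ is the right way to make that implication explicit.
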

Hence, by Corollary \ref{sect_gJ-gln(J)} and the Loday-Quillen-Tsygan theorem (cf. Theorem \ref{thm_LQ-T}), the next theorem 
follows from the theorem of Milnor-Moore \cite{MM}:
\begin{thm}\label{thm-FT_I-dual} There exist primitive elements $x_i \in H_{2i}(\fg J,k)$ for any $i \in \Z_{>0}$ such that the homology $H_\bullet(\fg J,k)$ is isomorphic to the Hopf algebra $S( \bigoplus_{i \in \Z_{>0}}k x_i)$.
\end{thm}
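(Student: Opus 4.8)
The plan is to transport the question to $\fgl_\infty(J)$ and then read off the Hopf algebra structure from the Loday--Quillen--Tsygan theorem together with the cyclic homology of $J$ computed just above. By Corollary \ref{cor_isom-Hom-gl-J} we have $H_\bullet(\fg J,k) \cong H_\bullet(\fgl_\infty(J),k)$, and, as recalled in the discussion preceding Theorem \ref{thm_LQ-T}, the latter carries a connected, graded-commutative and cocommutative Hopf algebra structure over the characteristic-zero field $k$. It therefore suffices to describe this Hopf algebra.

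First I would identify its primitive part. By Theorem \ref{thm_LQ-T} one has $\mathrm{Prim}(H_n(\fgl_\infty(J))) \cong HC_{n-1}(J)$. The preceding corollary gives $HC_{2i-1}(J) \cong k$ for every $i \in \Z_{>0}$ and $HC_{2j}(J)=0$ for all $j \geq 0$, while $HC_{-1}(J)=0$. Hence the primitive part is one-dimensional, spanned by a class $x_i$, in each even degree $2i$ with $i \in \Z_{>0}$, and vanishes in every other degree. In particular the Lie algebra $\mathrm{Prim}$ of primitives is concentrated in even degrees, so $\mathrm{Prim} = \bigoplus_{i \in \Z_{>0}} k x_i$.

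Next I would invoke the Milnor--Moore theorem \cite{MM}: a connected, graded cocommutative Hopf algebra over a field of characteristic zero is isomorphic, as a Hopf algebra, to the universal enveloping algebra $U(\mathrm{Prim})$ of its Lie algebra of primitives. Since $H_\bullet(\fgl_\infty(J))$ is also commutative, $U(\mathrm{Prim})$ is commutative, which forces the Lie bracket on $\mathrm{Prim}$ to vanish; therefore $U(\mathrm{Prim}) = S(\mathrm{Prim})$. Because $\mathrm{Prim}$ sits entirely in even degrees, the graded symmetric algebra is the honest polynomial algebra on the $x_i$, and transporting the isomorphism back through Corollary \ref{cor_isom-Hom-gl-J} yields $H_\bullet(\fg J,k) \cong S\bigl(\bigoplus_{i \in \Z_{>0}} k x_i\bigr)$ as Hopf algebras.

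The main thing to check carefully is that the hypotheses of Milnor--Moore genuinely hold: connectedness ($H_0 = k$), cocommutativity, and the characteristic-zero assumption; all three are furnished by the Loday--Quillen--Tsygan setup recalled above. The only real bookkeeping is then the degree shift $\mathrm{Prim}(H_n) \cong HC_{n-1}$ and the observation that, with all generators in even degrees, there are no sign subtleties, so that the graded symmetric algebra is a genuine polynomial algebra rather than a mixture of exterior and polynomial factors. The abelianness of $\mathrm{Prim}$, forced by commutativity of the homology Hopf algebra, is precisely what upgrades $U(\mathrm{Prim})$ to $S(\mathrm{Prim})$ and gives the stated form.
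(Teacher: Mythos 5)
Your proposal is correct and follows essentially the same route as the paper: transport via Corollary \ref{cor_isom-Hom-gl-J}, read off the primitives $HC_{\bullet-1}(J)$ from Theorem \ref{thm_LQ-T} and the periodicity corollary, and conclude by Milnor--Moore. The paper compresses all of this into one sentence, so your write-up simply makes explicit the same ingredients (connectedness, cocommutativity, abelianness of the primitives forced by commutativity, and the even-degree placement of the generators).
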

The dual statement to this theorem is due to B. Feigin and B. Tsygan:
\begin{thm}[Theorem 1 a) in \cite{FT}]\label{thm-FT-I}
There exist primitive elements $c_i \in H^{2i}(\fg J,k)$ for any $i \in \Z_{>0}$ such that the homology $H^\bullet(\fg J,k)$ is isomorphic to the Hopf algebra $S( \bigoplus_{i \in \Z_{>0}}k c_i)$.
\end{thm}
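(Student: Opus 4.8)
The plan is to deduce Theorem \ref{thm-FT-I} from its homological counterpart, Theorem \ref{thm-FT_I-dual}, by graded duality. First I would record that $H^\bullet(\fg J,k)$ is, as a Hopf algebra, the graded $k$-dual of the Hopf algebra $H_\bullet(\fg J,k)$: the cup product on cohomology is dual to the diagonal coproduct on homology, while the coproduct on cohomology is dual to the Pontryagin product on homology. To make this duality perfect I first check that $H_\bullet(\fg J,k)$ is of finite type. By Theorem \ref{thm-FT_I-dual} it equals $S(\bigoplus_{i\in\Z_{>0}} k x_i)$ with $\deg x_i = 2i$, so its degree-$n$ part is spanned by the monomials $x_{i_1}\cdots x_{i_m}$ with $i_1+\cdots+i_m = n/2$; these are finite in number (indexed by partitions of $n/2$), whence each $H_n(\fg J,k)$ is finite dimensional. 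The universal coefficient theorem over the field $k$ then gives $H^n(\fg J,k) \cong \mathrm{Hom}_k(H_n(\fg J,k),k)$ and identifies $H^\bullet(\fg J,k)$ with the graded dual Hopf algebra.

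Next I would verify that $H^\bullet(\fg J,k)$ satisfies the hypotheses of the Milnor--Moore theorem \cite{MM}. It is connected and of finite type by the previous paragraph. It is cocommutative, being dual to the commutative product on $H_\bullet(\fg J,k)$, and graded-commutative for the cup product; since its primitives will be seen to lie in even degrees, the algebra is concentrated in even total degree, so no sign subtleties arise and it is genuinely commutative. As $k$ has characteristic $0$, Milnor--Moore yields $H^\bullet(\fg J,k)\cong S\big(\mathrm{Prim}\,H^\bullet(\fg J,k)\big)$ as Hopf algebras.

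It remains to identify the primitives. For a connected graded Hopf algebra $\cH$ of finite type over $k$, passage to graded duals interchanges primitives and indecomposables: $\mathrm{Prim}(\cH^\ast)\cong \big(\cH^+/(\cH^+)^2\big)^\ast$, the dual of the space of indecomposables of $\cH$. Applying this with $\cH = H_\bullet(\fg J,k) = S(\bigoplus_{i} k x_i)$, whose indecomposables are exactly $\bigoplus_{i\in\Z_{>0}} k x_i$ in degrees $2i$, I obtain $\mathrm{Prim}\,H^\bullet(\fg J,k) = \bigoplus_{i\in\Z_{>0}} k c_i$, where $c_i \in H^{2i}(\fg J,k)$ is the functional dual to $x_i$. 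Combining this with the previous paragraph gives $H^\bullet(\fg J,k)\cong S(\bigoplus_{i\in\Z_{>0}} k c_i)$ with each $c_i$ primitive of degree $2i$, as claimed.

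The main obstacle is the careful bookkeeping of the dual Hopf structure: one must confirm that the cup product corresponds to the \emph{coproduct} of $H_\bullet(\fg J,k)$ rather than its product, so that the polynomial generators $c_i$ dual to the primitives $x_i$ are again primitive and not merely indecomposable. This is precisely where the interchange of primitives and indecomposables under duality does the real work. Once finite type secures a perfect pairing in each degree, the remaining verifications are formal, and in characteristic $0$ no divided-power corrections intervene (the factorials relating the graded dual of a polynomial Hopf algebra to itself are invertible), so the dual of a polynomial Hopf algebra on even primitive generators is again of the same form.
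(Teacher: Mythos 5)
Your proof is correct and takes essentially the same route as the paper, which obtains Theorem \ref{thm-FT-I} from the homological statement (Theorem \ref{thm-FT_I-dual}) purely by graded duality, stated there without further detail. Your finite-type check, the universal coefficient identification $H^n(\fg J,k)\cong \mathrm{Hom}_k(H_n(\fg J,k),k)$, the application of Milnor--Moore, and the interchange of primitives and indecomposables under graded duality supply exactly the bookkeeping the paper leaves implicit.
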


\subsection{Cyclic Homology of $J(R)$} 
Here, we determine the cyclic homology $HC_\bullet(J(R))$ of $J(R)$, where $R$ is an associative unital $k$-algebra.
For this purpose, we compute the total complex of the Connes bicomplex $\cB(J(R))$ recalled in \S \ref{sect_bicomplex}, with the aid of the spectral sequence. By definition, its $E^1$-term is given by
\[ E_{p,q}^1=\begin{cases} \; HH_{q-p}(J(R)) \qquad & q>p\geq 0, \\
                                           \; 0 \qquad & \text{otherwise}.
                                           \end{cases} 
\]
By Theorem \ref{thm-Hochschild-J(R)}, one may expect that the induced maps $(B_{J(R)})_\ast$ and $(B_{R})_\ast$ are related via the isomorphisms $\Phi_p$ :
 
\begin{lemma}\label{lemma_rel-b-Phi-II} For any $p \in \Z_{\geq 0}$, one has 
$\Phi_{p+1} \circ (B_R)_\ast +(B_{J(R)})_\ast \circ \Phi_p=0$. 
\end{lemma}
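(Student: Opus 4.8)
The plan is to prove the identity already at the level of the Hochschild chain complex and only then pass to homology. Concretely, I would introduce the degree-raising map
\[
F_p := \widetilde{\Phi}_{p+1}\circ B_R + B_{J(R)}\circ \widetilde{\Phi}_p : C_p(R) \longrightarrow C_{p+2}(J(R)),
\]
and observe that, by Lemma \ref{lemma_rel-b-Phi-I} together with the relation $b\circ B_\cR + B_\cR\circ b = 0$, both $\widetilde{\Phi}_{p+1}\circ B_R$ and $B_{J(R)}\circ \widetilde{\Phi}_p$ send $b$-cycles to $b$-cycles and induce, on homology, the two composites $\Phi_{p+1}\circ (B_R)_\ast$ and $(B_{J(R)})_\ast\circ \Phi_p$ respectively. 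Hence $F_p$ induces $\Phi_{p+1}\circ (B_R)_\ast + (B_{J(R)})_\ast\circ \Phi_p$, and it suffices to show that this induced map vanishes.

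To see the vanishing, I would pass to the normalized Hochschild complexes, in which a chain $a_0\otimes\cdots\otimes a_{n}$ is set to zero as soon as $a_i = I$ for some $i\geq 1$. One first checks that $b$, the operators $B_R$ and $B_{J(R)}$, and the maps $\widetilde{\Phi}_p$ all descend to the normalized complexes: indeed $\widetilde{\Phi}_p$ multiplies each entry $r_i$ by $I$ and inserts the single extra slot $I\tau$, so a degenerate input produces a degenerate output. Since the normalized complex computes the same $HH_\bullet$ (the degenerate subcomplex being acyclic), it is enough to prove that the induced map $\overline{F}_p$ on normalized complexes is identically zero.

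The computation of $\overline{F}_p$ is then a matter of tracking which terms survive normalization. In $\widetilde{\Phi}_{p+1}\circ B_R$, the summands of $B_R$ of the form $r_i\otimes 1\otimes r_{i+1}\otimes\cdots$ acquire an entry $1\cdot I = I$ in a positive slot after $\widetilde{\Phi}_{p+1}$, hence die in the normalized complex; only the summands $1\otimes r_i\otimes\cdots\otimes r_{i-1}$, which place the inserted unit in slot $0$, survive. Symmetrically, in $B_{J(R)}\circ \widetilde{\Phi}_p$ the terms $a_i\otimes I\otimes\cdots$ are degenerate and drop, leaving the terms $I\otimes a_i\otimes\cdots$ that again place the $J(R)$-unit $I$ in slot $0$. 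Thus both halves reduce to the same family of chains, built from $\{r_0 I,\ldots,r_p I\}$ together with one inserted $I\tau$ and one inserted unit $I$ in slot $0$ up to cyclic rotation. I would then match these two families term by term and verify that their signs — coming from the factors $(-1)^{pi}$ in $B$, the factors $(-1)^k$ governing the position of $I\tau$ in $\widetilde{\Phi}$, and the Koszul signs of the degree shift — are exactly opposite, so that $\overline{F}_p = 0$. The case $p=0$ already exhibits the mechanism: there $\overline{F}_0(r_0) = (I\otimes I\tau\otimes r_0 I - I\otimes r_0 I\otimes I\tau) + (I\otimes r_0 I\otimes I\tau - I\otimes I\tau\otimes r_0 I) = 0$.

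The main obstacle is precisely the sign bookkeeping in this last step: the composite $\widetilde{\Phi}_{p+1}\circ B_R$ involves on the order of $p^2$ summands, and one must set up the indexing of the surviving cyclic arrangements so that the sign produced by $B_R$ followed by the $\tau$-insertion of $\widetilde{\Phi}_{p+1}$ matches, up to an overall minus sign, the sign produced by the $\tau$-insertion of $\widetilde{\Phi}_p$ followed by $B_{J(R)}$. Once a careful reindexing of the surviving terms is fixed, the cancellation is forced, $\overline{F}_p=0$, and therefore $(F_p)_\ast = \Phi_{p+1}\circ (B_R)_\ast + (B_{J(R)})_\ast\circ \Phi_p = 0$, which is the claim.
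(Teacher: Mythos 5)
Your proof is correct, and it takes a genuinely different route from the paper's. The paper does not normalize: it restricts attention to a \emph{cycle} $\omega \in R^{\otimes p+1}$ and exhibits the defect as an explicit boundary,
\[
\bigl(\widetilde{\Phi}_{p+1} \circ B_R + B_{J(R)} \circ \widetilde{\Phi}_p\bigr)(\omega)
= -\,b\bigl(1 \otimes \tau \otimes 1 \otimes N(\omega)\bigr),
\qquad N=\sum_{k=0}^p t^k,
\]
with $t$ the signed cyclic rotation; so in the un-normalized complex the two composites do \emph{not} anticommute on the nose, only up to this correction term, and the lemma follows because the right-hand side vanishes in homology. Your normalization argument removes the need to guess that bounding chain: the terms which the correction chain exists to absorb (the summands of $B$ that insert a unit into a positive slot, and the images of degenerate chains under $\widetilde{\Phi}$) are exactly the degenerate ones, so after normalizing, the anticommutation becomes an identity of chain maps. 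I also confirm that the sign bookkeeping you flag as the main obstacle does go through: both surviving families are indexed by the $(p+1)(p+2)$ arrangements of $r_0I,\ldots,r_pI,I\tau$ in which the $r_iI$ keep their cyclic order; the term of $\widetilde{\Phi}_{p+1}\circ B_R$ labelled by rotation $i$ and insertion slot $k$ carries sign $(-1)^{pi+k}$, the matching term of $B_{J(R)}\circ\widetilde{\Phi}_p$ labelled by insertion slot $k'$ and rotation $j$ carries sign $(-1)^{k'+(p+1)j}$, and in each of the three cases of the obvious case division (arrangement beginning with $I\tau$, or beginning with some $r_iI$ with the rotation point before, respectively after, the $I\tau$ slot) these two signs differ by exactly $-1$; your $p=0$ computation is the faithful template of the general case. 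In exchange for invoking the standard quasi-isomorphism with the normalized complex, your route yields a stronger conclusion (a chain-level identity valid on all normalized chains, not just a statement on cycles), whereas the paper's route stays in the un-normalized complex at the cost of producing the homotopy term $1\otimes\tau\otimes 1\otimes N(\omega)$ --- which, seen from your side, is precisely a chain whose boundary collects the degenerate terms you discard.
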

Indeed, one can show that, for a cycle $\omega \in R^{\otimes p+1}$, i.e., $b(\omega)=0$, it follows that
\[  (\widetilde{\Phi}_{p+1} \circ B_R + B_{J(R)} \circ \widetilde{\Phi}_p)(\omega)
=-b(1 \otimes \tau \otimes 1 \otimes (N(\omega))), \]
where $N=\sum_{k=0}^p t^k$ with $t.(r_0\otimes r_1\otimes \cdots r_p)=(-1)^p r_p \otimes r_0\otimes \cdots \otimes r_{p-1}$ is a linear map. 
\medskip

By this lemma, together with Proposition 2.3.3 in \cite{Lod}, one may regard the $E^1$-term of the above spectral sequence as
\[ E_{p,q}^1=\begin{cases} \; HH_{q-p-1}(R) \qquad & q>p\geq 0, \\
                                           \; 0 \qquad & \text{otherwise},
                                           \end{cases} 
\]
whose $d^1$-map is given by the induced map $-(B_R)_\ast$ (or $0$). This means that the total complex of $\cB(J(R))$ is quasi-isomorphic to the total complex of $\cB(R)$ shifted by $[1]$, that is, we obtain the next theorem:
\begin{thm}\label{thm-main-I}
Let $R$ be an associative unital $k$-algebra. Then, $HC_{\bullet}(J(R)) \cong HC_{\bullet}(R)[1]$ as graded $k$-vector spaces.
\end{thm}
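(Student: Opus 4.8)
The plan is to compute the cyclic homology $HC_\bullet(J(R))$ via the spectral sequence of Connes' bicomplex $\cB(J(R))$, as set up just above the statement, and to transport it to the corresponding spectral sequence for $\cB(R)$ using the family of isomorphisms $\Phi_p$ from Proposition \ref{prop-isom-Phi}. By Theorem \ref{thm-Hochschild-J(R)}, the $E^1$-term of the spectral sequence for $\cB(J(R))$ is $E^1_{p,q} = HH_{q-p}(J(R)) \cong HH_{q-p-1}(R)$ for $q > p \geq 0$ and $0$ otherwise. The essential point is that this re-indexed $E^1$-page is precisely the $E^1$-page of the Connes spectral sequence computing $HC_\bullet(R)$, but shifted: the entry $HH_{q-p-1}(R)$ sits in bidegree $(p,q)$ rather than $(p,q-1)$. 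Hence the plan is to verify that the two spectral sequences are isomorphic up to this $[1]$-shift in total degree, from which the conclusion $HC_\bullet(J(R)) \cong HC_\bullet(R)[1]$ follows by convergence.

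First I would invoke Lemma \ref{lemma_rel-b-Phi-II}, which establishes the compatibility $\Phi_{p+1} \circ (B_R)_\ast + (B_{J(R)})_\ast \circ \Phi_p = 0$. This is the heart of the argument: it says that the $d^1$-differentials of the two spectral sequences — which are induced by the respective Connes $B$-maps — agree under the identifications $\Phi_p$, up to the harmless sign. Combined with the fact that each $\Phi_p$ is an isomorphism on $HH$, this identifies the differential $d^1 = (B_{J(R)})_\ast$ on $E^1_{p,q} = HH_{q-p}(J(R))$ with $-(B_R)_\ast$ on $HH_{q-p-1}(R)$. I would therefore present the identification of $E^1$-pages together with their $d^1$-maps as an isomorphism of the two first-page complexes, shifted in homological degree by one.

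Once the $E^1$-pages with their $d^1$-differentials are matched, I would argue that the isomorphism propagates to all later pages: since an isomorphism of $E^1$ commuting with $d^1$ induces an isomorphism of $E^2$, and inductively of every $E^r$, the entire spectral sequence for $\cB(J(R))$ is isomorphic to that of $\cB(R)$ up to the degree shift. Both spectral sequences converge (to $HC_\bullet(J(R))$ and $HC_\bullet(R)$ respectively), so the abutments are isomorphic with the same shift, giving $HC_n(J(R)) \cong HC_{n-1}(R)$ for all $n$, i.e. the graded statement $HC_\bullet(J(R)) \cong HC_\bullet(R)[1]$. Equivalently, and perhaps more cleanly, I would phrase the whole thing as a quasi-isomorphism: the maps $\widetilde{\Phi}_p$ assemble into a morphism of total complexes $\mathrm{Tot}\,\cB(R)[1] \to \mathrm{Tot}\,\cB(J(R))$ (the sign-twisted compatibility of Lemmas \ref{lemma_rel-b-Phi-I} and \ref{lemma_rel-b-Phi-II} being exactly the statement that this is a chain map), which is a quasi-isomorphism because it induces isomorphisms on $E^1$; passing to homology then yields the theorem directly.

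The main obstacle I anticipate is making the degree bookkeeping genuinely precise. The shift $[1]$ enters in two guises that must be reconciled: Proposition \ref{prop-isom-Phi} shifts Hochschild degree by one ($HH_p(R) \cong HH_{p+1}(J(R))$), while the sought conclusion shifts cyclic degree by one, and one must check that these are the same shift once the entries are placed in the correct bidegrees of the respective bicomplexes. I would need to confirm that $\Phi_p$ sends the column of $\cB(R)$ indexed by $p$ to the column of $\cB(J(R))$ indexed by $p$ (so that the horizontal $B$-differential is respected columnwise) while raising the internal Hochschild degree by one, and that the total-degree shift this produces is uniformly $+1$. The sign appearing in Lemma \ref{lemma_rel-b-Phi-II} is immaterial for the isomorphism of spectral sequences (rescaling by $(-1)$ on appropriate columns absorbs it), but I would remark on this explicitly to avoid any suspicion that the anticommutation obstructs the chain-map property. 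Beyond this indexing care, the argument is formal once Lemmas \ref{lemma_rel-b-Phi-I}, \ref{lemma_rel-b-Phi-II} and Proposition \ref{prop-isom-Phi} are in hand.
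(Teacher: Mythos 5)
Your proposal follows essentially the same route as the paper's proof: identify the $E^1$-page of the Connes-bicomplex spectral sequence for $\cB(J(R))$ with that for $\cB(R)$ shifted by one in total degree, using Theorem \ref{thm-Hochschild-J(R)} for the entries and Lemma \ref{lemma_rel-b-Phi-II} for the $d^1$-differentials, and conclude that the total complexes are quasi-isomorphic up to the shift $[1]$. The one caveat is your parenthetical ``cleaner'' phrasing: Lemma \ref{lemma_rel-b-Phi-II} is a statement about induced maps on homology (the chain-level anticommutation of $\widetilde{\Phi}_\bullet$ with the $B$-maps holds only up to a $b$-boundary, and only on cycles), so the claim that the $\widetilde{\Phi}_p$ literally assemble into a chain map of total complexes overstates what the lemmas provide; the spectral-sequence comparison you lead with is exactly the paper's argument.
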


Therefore, the Loday-Quillen-Tsygan theorem (cf. Theorem \ref{thm_LQ-T}) and the  Quillen version \cite{Q} of the Milnor-Moore theorem yield the next result:
\begin{thm}\label{thm-main-general}
$H_\bullet(\fg J(R)) \cong S(HC_\bullet(R)[2])$ as graded Hopf-algebras.
\end{thm}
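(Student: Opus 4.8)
The plan is to assemble the final isomorphism from three ingredients already established in the excerpt: the Loday--Quillen--Tsygan theorem (Theorem~\ref{thm_LQ-T}), the cyclic homology computation $HC_\bullet(J(R)) \cong HC_\bullet(R)[1]$ (Theorem~\ref{thm-main-I}), and the Hopf-algebra structure theorem of Milnor--Moore in Quillen's form. First I would record that, by Corollary~\ref{cor_isom-Hom-gl-J} and Lemma~\ref{lemma_isom_gJ}, we have $H_\bullet(\fg J(R)) \cong H_\bullet(\fgl_\infty(J(R)))$, so it suffices to understand the right-hand side. By the discussion preceding Theorem~\ref{thm_LQ-T}, $H_\bullet(\fgl_\infty(J(R)))$ is a graded-commutative, graded-cocommutative Hopf algebra over the characteristic-zero field $k$.

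The second step is to identify its primitive part. Theorem~\ref{thm_LQ-T} gives
\[
\mathrm{Prim}\big(H_\bullet(\fgl_\infty(J(R)))\big) \cong HC_{\bullet-1}(J(R)),
\]
and feeding in Theorem~\ref{thm-main-I} this becomes $HC_{\bullet-1}(J(R)) \cong HC_{\bullet-1}(R)[1] = HC_{\bullet-2}(R)$, which is exactly $HC_\bullet(R)[2]$ in the grading convention of the excerpt. This confirms the ``delooping'' slogan stated in the introduction and pins down the primitives as a graded vector space.

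The third step is to pass from the primitives to the whole Hopf algebra. Over a field of characteristic $0$, a graded-commutative graded-cocommutative connected Hopf algebra is, by the Milnor--Moore theorem in Quillen's version \cite{Q}, the free graded-commutative algebra---i.e.\ the symmetric algebra $S(-)$---on its space of primitive elements. Applying this to $H_\bullet(\fgl_\infty(J(R)))$ with primitive part $HC_\bullet(R)[2]$ yields
\[
H_\bullet(\fg J(R)) \cong S\big(HC_\bullet(R)[2]\big)
\]
as graded Hopf algebras, which is the assertion.

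The main obstacle, and the one deserving the most care, is verifying that the hypotheses of the Milnor--Moore/Quillen theorem genuinely hold here: one must check that the Hopf algebra is \emph{connected} (i.e.\ $H_0 \cong k$, so that the coalgebra is conilpotent and the structure theorem applies) and that the grading is compatible with the even/odd signs required by the graded symmetric algebra convention, since $HC_\bullet(R)[2]$ contains classes in both even and odd total degree. Connectedness follows because $H_0(\fgl_\infty(J(R))) \cong k$, and the sign bookkeeping is routine but must be stated carefully so that $S(-)$ means the free graded-commutative algebra (polynomial on even generators, exterior on odd ones) rather than a naive symmetric power. Once these compatibilities are confirmed, the three cited theorems combine formally and no further computation is required.
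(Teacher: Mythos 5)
Your proposal is correct and follows exactly the paper's own route: reduce to $H_\bullet(\fgl_\infty(J(R)))$ via Corollary~\ref{cor_isom-Hom-gl-J}, identify the primitives as $HC_{\bullet-1}(J(R)) \cong HC_\bullet(R)[2]$ by combining Theorem~\ref{thm_LQ-T} with Theorem~\ref{thm-main-I}, and conclude with the Quillen form of the Milnor--Moore theorem. Your added remarks on connectedness and the graded-commutative sign conventions are sensible diligence that the paper leaves implicit, but they do not constitute a different argument.
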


\begin{rem}\label{rem_delooping} 
By the Loday-Quillen-Tsygan theorem (cf. Theorem \ref{thm_LQ-T}) and Theorem \ref{thm-main-I}, we observe that 
\[ \mathrm{Prim}\,( H_\bullet(\fg J(R))) \cong \mathrm{Prim}\,(H_{\bullet}(\fgl_\infty(R)))[1] \cong HC_{\bullet}(R)[2]. \]
Therefore, the difference between $\fg J(R)$ and $\fgl_\infty(R)$ is by no means trivial.
\end{rem}

The orthogonal and symplectic versions of our results are obtained in \cite{FI} as an application of the results this article.

\subsection{Universal Central Extension}\label{sect_UCE}
The Lie algebra $\fg J(R)$ begin perfect, it allows the universal central extension (cf. Chapter 7 in \cite{W}). In this subsection, we describe it explicitly.

Theorem \ref{thm-main-general} implies
\begin{cor} $H_2(\fg J(R))=HC_0(R)=R^{ab}=R/[R,R]$.
\end{cor}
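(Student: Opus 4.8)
The plan is to read off the degree-$2$ component directly from Theorem \ref{thm-main-general}, which furnishes an isomorphism of \emph{graded} Hopf algebras $H_\bullet(\fg J(R)) \cong S(HC_\bullet(R)[2])$. Since the grading is respected, it suffices to isolate the degree-$2$ homogeneous piece of the free graded-commutative algebra $S(HC_\bullet(R)[2])$ and then identify it with $R^{ab}$. No spectral sequence or complex computation is needed at this stage: everything is formal once the shifted generating space is understood.

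First I would describe the graded vector space of generators $V := HC_\bullet(R)[2]$. By the shift convention, the degree-$n$ part is $V_n = HC_{n-2}(R)$, so the lowest nonvanishing generator sits in degree $2$, namely $V_2 = HC_0(R)$, whereas $V_0 = HC_{-2}(R) = 0$ and $V_1 = HC_{-1}(R) = 0$. Next I would extract $S(V)_2$. The contributions to degree $2$ are the linear term $V_2$ and any decomposable symmetric products of generators whose degrees sum to $2$; but since $V$ is concentrated in degrees $\geq 2$, no product of two or more generators can have total degree $2$. Hence $S(V)_2 = V_2 = HC_0(R)$, and therefore $H_2(\fg J(R)) \cong HC_0(R)$.

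Finally I would identify $HC_0(R)$ with $R^{ab}$. Reading the degree-$0$ segment of Connes' periodicity exact sequence \eqref{long_cyclic-Hoch}, one has $0 = HC_{-1}(R) \rightarrow HH_0(R) \rightarrow HC_0(R) \rightarrow HC_{-2}(R) = 0$, so $HC_0(R) \cong HH_0(R) = R/[R,R] = R^{ab}$, the last equality being the convention fixed after \eqref{eq-Hochschild-AJ}. Chaining these isomorphisms gives $H_2(\fg J(R)) \cong R^{ab}$, as claimed. The argument presents essentially no obstacle; the only point demanding care is the grading bookkeeping, namely verifying that the vanishing of $V_0$ and $V_1$ forbids any decomposable symmetric product from landing in degree $2$, so that the degree-$2$ part of the symmetric algebra is exactly the space of linear generators there.
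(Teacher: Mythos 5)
Your proposal is correct and follows exactly the paper's own route: the paper derives this corollary directly from Theorem \ref{thm-main-general}, reading off the degree-$2$ part of $S(HC_\bullet(R)[2])$, which is precisely your grading argument (the paper merely leaves the bookkeeping, and the standard identification $HC_0(R)\cong HH_0(R)=R/[R,R]$, implicit). The paper additionally remarks that the statement can alternatively be obtained from Theorem \ref{thm-Hochschild-J(R)} together with the periodicity sequence \eqref{long_cyclic-Hoch}, but that is a second route, not the one you (or the paper primarily) used.
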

Remark that this result also follows from Theorem \ref{thm-Hochschild-J(R)} and Connes' Periodicity Exact Sequence \eqref{long_cyclic-Hoch}.

Set $I_+=\sum_{i\geq 0} e_{i,i}$ and $I_-=\sum_{i<0} e_{i,i}$. It is clear that $I_{\pm} \in J(R)$ and $I_\sigma I_\tau=\delta_{\sigma, \tau} I_\tau$ for $\sigma, \tau \in \{\pm\}$. Let $\Phi:J(R) \rightarrow J(R)$ be the $k$-linear map defined by $\Phi(X)=I_+XI_+$.  A matrix $M=(m_{i,j}) \in J(R)$ is said to be of finite support if the set $\{(i,j)\, \vert m_{i,j} \neq 0\}$ is finite. We denote the Lie subalgebra of $\fg J(R)$ consisting of the matrices of finite support by $\fg F(R)$.  Define the trace map $\Tr: \fg F(R) \rightarrow R^{ab}$ as the composition of the usual trace map $\mathrm{tr}: \fg F(R) \rightarrow R; M=(m_{i,j})\, \mapsto \sum_i m_{i,i}$ and the abelianization $\pi^{ab}: R \twoheadrightarrow R^{ab}$. 

Now, let $\Psi: J(R) \times J(R) \rightarrow R^{ab}$ be the $k$-bilinear map defined by
\begin{align*}
\Psi(X,Y)=
&\Tr([\Phi(X), \Phi(Y)]-\Phi([X,Y])) \\
=
&\Tr((I_+YI_-)(I_-XI_+)-(I_+XI_-)(I_-YI_+)).
\end{align*}
It can be checked that
\begin{lemma} The $k$-bilinear map $\Psi$ is a $2$-cocycle, i.e., 
it satisfies
\begin{enumerate}
\item $\Psi(Y,X)=-\Psi(X,Y)$, 
\item $\Psi([X,Y],Z)+\Psi([Y,Z],X)+\Psi([Z,X],Y)=0$,
\end{enumerate}
for any $X,Y,Z \in J(R)$. 
\end{lemma}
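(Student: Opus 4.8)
The plan is to realize $\Psi$ as the $2$-cocycle attached to the linear map $\Phi(X)=I_+XI_+$, viewing $\Phi$ as an associative-algebra homomorphism \emph{modulo} the ideal $\fg F(R)$ of finite-support matrices, and then to exploit that $\Tr$ is a genuine trace on that ideal. Abbreviate $G(X,Y):=[\Phi(X),\Phi(Y)]-\Phi([X,Y])$, so that $\Psi(X,Y)=\Tr(G(X,Y))$. Property (1) is then immediate: replacing $(X,Y)$ by $(Y,X)$ turns $G(X,Y)$ into $[\Phi(Y),\Phi(X)]-\Phi([Y,X])=-G(X,Y)$, whence $\Psi(Y,X)=-\Psi(X,Y)$. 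Thus the whole content is the cocycle identity (2).

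First I would record the two structural facts that make the trace meaningful. For any band matrix $X\in J(R)$ the corner blocks $I_+XI_-$ and $I_-XI_+$ have finite support, since the band condition $|i-j|\le N$ together with $i\ge 0>j$ (respectively $i<0\le j$) confines their nonzero entries to a finite rectangle. Consequently $\Phi(XY)-\Phi(X)\Phi(Y)=I_+XI_-YI_+$ lies in $\fg F(R)$, and therefore so does $G(X,Y)=(I_+YI_-)(I_-XI_+)-(I_+XI_-)(I_-YI_+)$, the matrix already displayed in the definition of $\Psi$. Moreover $\fg F(R)$ is a two-sided ideal of $J(R)$, because a finite-support matrix times a band matrix is again of finite support; and on this ideal $\Tr$ is cyclic, i.e. $\Tr(AB)=\Tr(BA)$ for $A\in\fg F(R)$ and $B\in J(R)$. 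Here the passage to $R^{ab}=R/[R,R]$ is essential, as the entries of $A,B$ need not commute in $R$; this is precisely why $\Psi$ is forced to take values in $R^{ab}$.

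For (2) I would compute the cyclic sum $\Psi([X,Y],Z)+\Psi([Y,Z],X)+\Psi([Z,X],Y)=\Tr\!\big(\sum_{\mathrm{cyc}}G([X,Y],Z)\big)$. Expanding $G([X,Y],Z)=[\Phi([X,Y]),\Phi(Z)]-\Phi([[X,Y],Z])$, the summand $\sum_{\mathrm{cyc}}\Phi([[X,Y],Z])=\Phi\big(\sum_{\mathrm{cyc}}[[X,Y],Z]\big)$ vanishes by the Jacobi identity in $J(R)$ and the linearity of $\Phi$. In the remaining summand I would substitute $\Phi([X,Y])=[\Phi(X),\Phi(Y)]-G(X,Y)$, giving $[\Phi([X,Y]),\Phi(Z)]=[[\Phi(X),\Phi(Y)],\Phi(Z)]-[G(X,Y),\Phi(Z)]$; now $\sum_{\mathrm{cyc}}[[\Phi(X),\Phi(Y)],\Phi(Z)]=0$ by the Jacobi identity applied to $\Phi(X),\Phi(Y),\Phi(Z)$. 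Hence $\sum_{\mathrm{cyc}}G([X,Y],Z)=-\sum_{\mathrm{cyc}}[G(X,Y),\Phi(Z)]$, and since each $G(X,Y)\in\fg F(R)$ while $\Phi(Z)\in J(R)$, every bracket $[G(X,Y),\Phi(Z)]$ lies in the ideal $\fg F(R)$, so its trace vanishes by cyclicity. Summing yields (2).

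The main obstacle is finiteness bookkeeping rather than algebraic manipulation. The individual matrices $\Phi([[X,Y],Z])$ and $[\Phi([X,Y]),\Phi(Z)]$ are \emph{not} of finite support, so one may not split $\Tr(G([X,Y],Z))$ into separate traces — only the particular combination defining $G$ lands in $\fg F(R)$. The purpose of the two Jacobi cancellations above is exactly to reorganize the cyclic sum so that every trace one actually evaluates is the trace of an element of $\fg F(R)$, with the non-traceable pieces cancelling \emph{before} $\Tr$ is applied. A secondary point requiring care is the cyclicity of $\Tr$ in $R^{ab}$, which underlies both the well-definedness of $\Psi$ and the final vanishing.
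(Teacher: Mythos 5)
Your proof is correct and complete. Note that the paper gives no argument at all for this lemma (it is stated with ``It can be checked that''), and the verification it implicitly invites is a direct computation with the explicit corner-block formula $\Psi(X,Y)=\Tr\bigl((I_+YI_-)(I_-XI_+)-(I_+XI_-)(I_-YI_+)\bigr)$, expanding the cyclic sum and cancelling terms by hand. Your route is more structural: you isolate the curvature term $G(X,Y)=[\Phi(X),\Phi(Y)]-\Phi([X,Y])$ and establish the two facts that make $\Tr\circ G$ meaningful --- that $G(X,Y)\in\fg F(R)$ because the corner blocks $I_{\pm}XI_{\mp}$ of a band matrix have finite support, and that $\Tr$ is a trace on the two-sided ideal $\fg F(R)$ with values in $R^{ab}$ (the cyclicity $\Tr(AB)=\Tr(BA)$ is exactly what the passage to $R/[R,R]$ buys, as you observe) --- after which the cocycle identity follows from two applications of the Jacobi identity together with $\Tr\bigl([\fg F(R),J(R)]\bigr)=0$, with no entrywise computation. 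This is the standard ``almost-homomorphism into an algebra with a trace-carrying ideal'' argument underlying the Japanese cocycle, and it generalizes verbatim to other idempotent cuttings. You also correctly identify and handle the one genuine trap: the individual matrices $\Phi([[X,Y],Z])$ and $[\Phi([X,Y]),\Phi(Z)]$ are not of finite support, so one may not distribute $\Tr$ over them; your reorganization $\sum_{\mathrm{cyc}}G([X,Y],Z)=-\sum_{\mathrm{cyc}}[G(X,Y),\Phi(Z)]$, carried out inside $J(R)$ before the trace is applied, ensures that every matrix actually fed to $\Tr$ lies in $\fg F(R)$, where the bracket has vanishing trace since $\fg F(R)$ is an ideal. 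What your approach buys over the paper's implied brute-force check is both brevity and portability; what it costs is nothing, since the two pillars (finiteness of corner blocks, cyclicity of $\Tr$ modulo $[R,R]$) are needed for the direct computation anyway.
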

This $2$-cocycle is called {\sl Japanese cocycle}. Let $\widetilde{\fg J}(R)$ be the universal central extension of the Lie algebra $\fg J(R)$. 
\begin{thm} The Lie algebra $\widetilde{\fg J}(R)$ is a $k$-vector space
\[ \widetilde{\fg J}(R)=\fg J(R) \oplus R^{ab} \]
equipped with the Lie bracket $[ \cdot, \cdot ]'$ defined by 
\[
[\widetilde{\fg J}(R), R^{ab}]'=0, \qquad
[X,Y]'=[X,Y]+\Psi(X,Y) \quad X,Y \in \fg J(R).
\]
\end{thm}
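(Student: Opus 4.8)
The plan is to invoke the general theory of universal central extensions for perfect Lie algebras (Chapter 7 of \cite{W}), feeding it the two facts we already have: the cocycle property of $\Psi$ verified in the preceding lemma, and the computation $H_2(\fg J(R)) \cong R^{ab}$ from the corollary above. Recall that over the field $k$ a perfect Lie algebra $\fg$ admits a universal central extension, unique up to isomorphism, whose kernel is canonically identified with $H_2(\fg,k)$. First I would note that the two conditions on $\Psi$ in the cocycle lemma are exactly antisymmetry and the Jacobi identity for $[\,\cdot,\cdot\,]'$, so $\widetilde{\fg J}(R)=\fg J(R)\oplus R^{ab}$ really is a central extension of $\fg J(R)$ by the trivial module $R^{ab}$, with well-defined class $[\Psi]\in H^2(\fg J(R),R^{ab})$. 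Since $\fg J(R)$ is perfect, its universal central extension exists and has kernel $H_2(\fg J(R))\cong R^{ab}$, which matches the kernel of $\widetilde{\fg J}(R)$; the only remaining point is that this particular extension is the universal one.

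The key structural input is the universal coefficient isomorphism. Because $k$ is a field and $\fg J(R)$ is perfect, so that $H_1(\fg J(R))=0$, the functor $\mathrm{Hom}_k(-,R^{ab})$ is exact and commutes with homology, giving a natural isomorphism
\[
H^2(\fg J(R),R^{ab}) \;\cong\; \mathrm{Hom}_k\!\bigl(H_2(\fg J(R)),R^{ab}\bigr)\;=\;\mathrm{Hom}_k(R^{ab},R^{ab}).
\]
Under this identification the universal central extension corresponds to the identity endomorphism $\id_{R^{ab}}$; equivalently, a central extension of a perfect Lie algebra by a trivial module is universal precisely when the $k$-linear map $H_2(\fg J(R))\to R^{ab}$ attached to its class is an isomorphism (pushing the universal extension forward along any isomorphism of the kernel again yields a universal extension). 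Thus it suffices to prove that the map $R^{ab}=H_2(\fg J(R))\to R^{ab}$ induced by $[\Psi]$ is an isomorphism.

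To carry this out I would produce an explicit $2$-cycle in the Chevalley--Eilenberg complex $\Lambda^2\fg J(R)$ representing a generator of $H_2(\fg J(R))$ and evaluate $\Psi$ on it. Working from the second form of the cocycle, $\Psi(X,Y)=\Tr\bigl((I_+YI_-)(I_-XI_+)-(I_+XI_-)(I_-YI_+)\bigr)$, one sees that $\Psi$ only detects the parts of $X,Y$ crossing the partition $\{i\geq 0\}\sqcup\{i<0\}$: for $a,b\in R$ a direct computation gives $\Psi(a\,e_{0,-1},\,b\,e_{-1,0})=-\overline{ab}$ in $R^{ab}$, so $\Psi$ is exactly the regularized trace across the $I_+/I_-$ boundary. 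I would then match such an off-diagonal expression with the generator of
\[
H_2(\fg J(R))\;\cong\;HC_1(J(R))\;\cong\;HC_0(R)\;=\;R^{ab}
\]
coming from the Loday--Quillen--Tsygan identification (Theorem \ref{thm_LQ-T}) together with the delooping of Theorem \ref{thm-main-I}, and verify that the resulting endomorphism of $R^{ab}$ is the identity up to an invertible scalar. Once $[\Psi]$ is seen to correspond to an isomorphism, the universal coefficient criterion of the previous paragraph forces $\widetilde{\fg J}(R)$ to be the universal central extension.

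The main obstacle is this last matching step. Everything else is either the general machinery of \cite{W} or the bilinear bookkeeping already packaged into the cocycle lemma, but pinning down a concrete generating $2$-cycle requires tracing an element of $R^{ab}$ back through the whole chain of isomorphisms that produced $H_2(\fg J(R))\cong R^{ab}$ --- the collapse of the Hochschild--Serre type spectral sequence, the shuffle map $\widetilde{\Phi}_\bullet$ of \eqref{eq-def-map-Phi-I}, and the Loday--Quillen--Tsygan comparison between the cyclic and Chevalley--Eilenberg pictures --- and then reconciling the normalization and signs of that generator with the regularized trace defining $\Psi$. I expect the reconciliation of these conventions, rather than any conceptual difficulty, to be where the real work lies.
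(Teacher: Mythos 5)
The paper states this theorem with no proof at all (the cocycle lemma before it is also left as ``it can be checked''), so there is no argument of the authors to compare yours against; judged on its own, your proposal chooses the right and essentially unavoidable framework: verify that $\Psi$ defines a central extension, use that over a field $H^2(\fg J(R),R^{ab})\cong \mathrm{Hom}_k(H_2(\fg J(R)),R^{ab})$ for the perfect Lie algebra $\fg J(R)$, recall $H_2(\fg J(R))\cong R^{ab}$ from the preceding corollary, and then show that $[\Psi]$ corresponds under this identification to an isomorphism $H_2(\fg J(R))\to R^{ab}$. That recognition criterion for universal central extensions is correct. The genuine gap is that the decisive step --- the one the whole reduction hinges on --- is only announced, not carried out, and the one computation you do offer cannot play that role as it stands: the element $a\,e_{0,-1}\wedge b\,e_{-1,0}$ on which you evaluate $\Psi$ is \emph{not} a $2$-cycle in the Chevalley--Eilenberg complex, since its boundary is $\pm[a\,e_{0,-1},b\,e_{-1,0}]=\pm(ab\,e_{0,0}-ba\,e_{-1,-1})\neq 0$; hence the value $-\overline{ab}$ says nothing yet about the pairing of $[\Psi]$ with $H_2(\fg J(R))$. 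A correct substitute is, for instance, $c_r=r\tau^{-1}\wedge\tau$ with $\tau=\sum_i e_{i,i+1}$ and $r\tau^{-1}=\sum_i re_{i,i-1}$: here $[r\tau^{-1},\tau]=0$, so $c_r$ is an honest cycle, and the same kind of computation gives $\Psi(r\tau^{-1},\tau)=-\Tr(re_{0,0})=-\overline{r}$.

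Even with such cycles in hand, the substantive work you defer is exactly what remains: one must show that $r\mapsto [c_r]$ realizes the isomorphism $R^{ab}\cong H_2(\fg J(R))$, i.e.\ chase $c_r$ through the Loday--Quillen--Tsygan generalized trace into $HC_1(J(R))$ and through the delooping isomorphism of Theorem \ref{thm-main-I} back to $HC_0(R)=R^{ab}$, so that the endomorphism of $R^{ab}$ induced by $[\Psi]$ is identified (here it comes out as $r\mapsto-\overline{r}$, which is invertible). Be careful also with your phrase ``identity up to an invertible scalar'': since $R^{ab}$ is in general infinite dimensional, checking surjectivity of the induced endomorphism would not suffice for bijectivity, so you really do need to pin the map down, e.g.\ by the explicit cycles above together with naturality in $R$. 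In short: right strategy, correct reduction, but the proof stops precisely where the theorem begins, and the single computation meant to anchor it is performed on a chain that is not a cycle.
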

\subsection{The case of smooth commutative algebras}\footnote{The notion of smoothness for commutative algebras is more general than the quasi-freeness introduced by Cuntz and Quillen in \cite{CQ}. Indeed, a smooth commutative algebra may have any positive finite cohomological dimension whereas the cohomological dimension of a quasi-free commutative algebra can be at most $1$.}

Here, we suppose that $R$ is a {\sl smooth commutative algebra} over $k$, i.e., $R$ is a commutative unital $k$-algebra such that, for any maximal ideal $\fm \subset R$,  the kernel of the localized map $\mu_{\fm}: (R \otimes_kR)_{\mu^{-1}(\fm)} \rightarrow R_{\fm}$, where $\mu:R \otimes_k R \rightarrow R$ is the multiplication map, is generated by a regular sequence in $(R \otimes_kR)_{\mu^{-1}(\fm)}$ (cf. \S 3.4 in \cite{Lod}).

The Hochschild-Kostant-Rosenberg theorem \cite{HKR} asserts that $HH_{\bullet}(R)$ is isomorphic to $\Omega_{R\vert k}^\bullet$ as graded algebra. Hence, Theorem \ref{thm-Hochschild-J(R)} implies
\begin{cor}\label{cor-HKR-version}
$HH_\bullet(J(R)) \cong \Omega_{R\vert k}^{\bullet-1}$ as graded algebras. 
\end{cor}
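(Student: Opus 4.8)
The plan is to combine the Hochschild--Kostant--Rosenberg isomorphism with the already-established computation of the Hochschild homology of $J(R)$ in Theorem \ref{thm-Hochschild-J(R)}. Since $R$ is smooth over $k$, the HKR theorem \cite{HKR} gives a canonical isomorphism of graded $k$-algebras $HH_\bullet(R) \cong \Omega_{R\vert k}^\bullet$, the exterior algebra of K\"ahler differentials. The content of the corollary is then purely formal: one chains this with the degree-shifted isomorphism $HH_\bullet(J(R)) \cong HH_{\bullet-1}(R)$ furnished by Theorem \ref{thm-Hochschild-J(R)}.

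First I would record the HKR statement precisely as an isomorphism of \emph{graded algebras}, so that the multiplicative structures are matched and not merely the underlying graded vector spaces. Next I would invoke Theorem \ref{thm-Hochschild-J(R)}, which asserts $HH_\bullet(J(R)) \cong HH_{\bullet-1}(R)$ as graded algebras, where the explicit algebra isomorphism in each degree is the map $\Phi_p$ of Proposition \ref{prop-isom-Phi}. Composing the two identifications degreewise yields, for each $p$,
\[
 HH_{p+1}(J(R)) \;\cong\; HH_p(R) \;\cong\; \Omega_{R\vert k}^p,
\]
which, after the index shift $\bullet \mapsto \bullet - 1$, is exactly the claimed isomorphism $HH_\bullet(J(R)) \cong \Omega_{R\vert k}^{\bullet-1}$ of graded algebras.

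The only point requiring genuine care is the compatibility of the multiplicative structures under the shift. Theorem \ref{thm-Hochschild-J(R)} is already stated as an isomorphism of graded algebras, and HKR is likewise multiplicative, so the composite respects products; I would simply remark that the graded-algebra structure on the target $\Omega_{R\vert k}^{\bullet-1}$ is the one inherited through $\Phi$, matching the shuffle product on $HH_\bullet(J(R))$. I do not expect any real obstacle here: the statement is a direct corollary, and the substantive work (the HKR theorem, the shifted Hochschild computation, and the explicit isomorphism $\Phi_p$) has all been done upstream. The proof is therefore a one-line specialization, and I would present it as such.
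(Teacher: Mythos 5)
Your proof is correct and is essentially identical to the paper's own argument: the corollary is obtained by composing the Hochschild--Kostant--Rosenberg isomorphism $HH_\bullet(R) \cong \Omega_{R\vert k}^\bullet$ (valid since $R$ is smooth) with the graded-algebra isomorphism $HH_\bullet(J(R)) \cong HH_{\bullet-1}(R)$ of Theorem \ref{thm-Hochschild-J(R)}. Your extra remark on tracking the multiplicative structure through the shift is a precision the paper leaves implicit, but it does not change the route.
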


By Theorem \ref{thm-main-I} and Theorem 3.4.12 in \cite{Lod}, we have
\begin{thm}\label{thm-main-HC} Suppose that $R$ is smooth over $k$. Then, there is a canonical isomorphism
\[ HC_p(J(R)) \cong \Omega_{R\vert k}^{p-1}/d\Omega_{R\vert k}^{p-2} \oplus \bigoplus_{\substack{0\leq r<p-2 \\ r \equiv p-1 [2]}} H_{DR}^{r}(R),
\]
where $H_{DR}^r(R)$ signifies the de Rham cohomology of $R$. 
\end{thm}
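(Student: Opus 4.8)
The plan is to combine Theorem \ref{thm-main-I}, which identifies $HC_\bullet(J(R))$ with $HC_\bullet(R)[1]$, with the structural decomposition of cyclic homology in the smooth case. For a smooth commutative $k$-algebra $R$, Theorem 3.4.12 in \cite{Lod} gives the canonical splitting
\begin{equation*}
 HC_n(R) \cong \Omega_{R\vert k}^n/d\Omega_{R\vert k}^{n-1} \; \oplus \; \bigoplus_{\substack{0\leq r<n \\ r \equiv n\, [2]}} H_{DR}^{r}(R).
\end{equation*}
The shift $[1]$ of Theorem \ref{thm-main-I} means that $HC_p(J(R)) \cong HC_{p-1}(R)$, so I would simply substitute $n=p-1$ into the displayed formula and read off the result. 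The $\Omega$-part becomes $\Omega_{R\vert k}^{p-1}/d\Omega_{R\vert k}^{p-2}$, matching the first summand in the target statement.

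Next I would check that the de Rham summand transforms correctly under the substitution $n = p-1$. The indexing condition $0 \leq r < n$ with $r \equiv n \,[2]$ becomes $0 \leq r < p-1$ with $r \equiv p-1\,[2]$. The target statement instead writes $0 \leq r < p-2$; I would therefore verify that the boundary term $r=p-2$ does not contribute. Since $r \equiv p-1\,[2]$ forces $r$ and $p-1$ to have the same parity, the value $r=p-2$ has the \emph{opposite} parity from $p-1$ and is automatically excluded by the congruence, so the ranges $r<p-1$ and $r<p-2$ select exactly the same indices. This confirms that both forms of the direct sum agree, and the two presentations of the formula are literally equal.

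The only genuine verification is that the isomorphism of Theorem \ref{thm-main-I} is compatible with the Hodge-type decomposition, i.e. that it is indeed \emph{canonical} and respects the splitting into the $\Omega/d\Omega$ piece and the de Rham pieces. I would argue this at the level of the Connes bicomplex spectral sequence used to prove Theorem \ref{thm-main-I}: there the $E^1$-page of $\cB(J(R))$ was identified, via the isomorphisms $\Phi_p$ and Lemma \ref{lemma_rel-b-Phi-II}, with the $E^1$-page of $\cB(R)$ shifted by $[1]$, and the $d^1$-differential was matched with $-(B_R)_\ast$. Since the Hodge decomposition of $HC_\bullet(R)$ in the smooth case arises precisely from the collapse of this spectral sequence (with $(B_R)_\ast$ identified with the de Rham differential via Proposition 2.3.3 in \cite{Lod}), the shifted identification carries the decomposition of $HC_{p-1}(R)$ term by term onto $HC_p(J(R))$.

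I expect the main obstacle to be purely bookkeeping: keeping the degree shifts and the parity condition in the indexing of the de Rham summand straight, and making sure the $\Omega^{p-1}/d\Omega^{p-2}$ quotient (rather than a closed-forms or harmonic representative) is the correct image of the bottom piece of the shifted decomposition. No new homological input beyond Theorem \ref{thm-main-I} and the cited structure theorem for smooth algebras is required; the statement is essentially a reindexed corollary.
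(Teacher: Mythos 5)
Your proposal is correct and follows exactly the paper's own route: the paper deduces the theorem in one line from Theorem \ref{thm-main-I} combined with Theorem 3.4.12 of \cite{Lod}, which is precisely the substitution $n=p-1$ you carry out. Your parity check showing that the bounds $r<p-1$ and $r<p-2$ select the same indices is a correct (and worthwhile) clarification of the indexing in the stated formula.
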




\subsection{Homology of some Lie subalgebras of $\fg J(R)$}
Let $R$ be an associative unital $k$-algebra. Set
\begin{align*}
J_+ (R)=
&\{ M=(m_{i,j}) \in J(R) \vert \; m_{i,j}=0 \quad (\text{$\forall\, i,j$} \,\, \text{s.t.} \,\,  i \leq 0\, \text{or}\, j\leq 0) \, \}, \\
J^\geq(R)=
&\{ M=(m_{i,j}) \in J(R) \vert \; \exists\,\, N \,\, \text{s.t.} \,\, m_{i,j}=0 \quad (\text{$\forall\, i,j$} \,\, \text{s.t.} \,\, i<N \, \text{or}\,  j<N)\}.
\end{align*}
Via the isomorphism $\varphi$ of $J(R)$ defined by $\varphi(e_{i,j})=e_{-i,-j}$, where $e_{i,j}$ is the matrix unit, we also set $J_-(R)=\varphi(J_+(R))$ and $J^\leq (R)=\varphi(J^\geq(R))$. As before, the algebras $J_\pm(R), J^{\geq}(R)$ and $J^{\leq}(R)$, viewed as Lie algebras, will be denoted by $\fg J_\pm(R), \fg J^{\geq}(R)$ and $\fg J^{\leq}(R)$, respectively.
We have
\begin{lemma} \label{lemma_isom-homologies}
$H_\bullet(\fg J_+(R), k) \cong H_\bullet(\fg J_-(R),k) \cong H_\bullet(\fg J^{\leq}(R),k) \cong H_\bullet(\fg^{\geq}(R),k)$.
\end{lemma}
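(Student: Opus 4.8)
The plan is to deduce all the isomorphisms from two mechanisms: a reflection automorphism that disposes of the two ``$-$''/``$\leq$'' cases for free, and a direct-limit presentation that compares $\fg J_+(R)$ with $\fg J^{\geq}(R)$.

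First I would record that $\varphi$ is a Lie algebra automorphism of $\fg J(R)$. Indeed $e_{i,j}\mapsto e_{-i,-j}$ extends $R$-linearly to an \emph{associative} algebra automorphism of $J(R)$ (one checks $\varphi(e_{i,j}e_{k,l})=\varphi(e_{i,j})\varphi(e_{k,l})$ directly, and $|(-i)-(-j)|=|i-j|$ shows the band condition is preserved), hence to an automorphism of $\fg J(R)$. By construction $\varphi$ restricts to Lie algebra isomorphisms $\fg J_+(R)\xrightarrow{\sim}\fg J_-(R)$ and $\fg J^\geq(R)\xrightarrow{\sim}\fg J^\leq(R)$, so functoriality of the Chevalley--Eilenberg complex yields $H_\bullet(\fg J_+(R))\cong H_\bullet(\fg J_-(R))$ and $H_\bullet(\fg J^\geq(R))\cong H_\bullet(\fg J^\leq(R))$. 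This reduces the lemma to the single isomorphism $H_\bullet(\fg J_+(R))\cong H_\bullet(\fg J^\geq(R))$.

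Next I would set up the comparison. For $N\in\Z$ write $J^{\geq N}(R)=\{M=(m_{i,j})\in J(R)\mid m_{i,j}=0 \text{ whenever } i<N \text{ or } j<N\}$, so that $J_+(R)=J^{\geq 1}(R)$ and $J^\geq(R)=\bigcup_{N}J^{\geq N}(R)$ is an increasing union of subalgebras; hence $\fg J^\geq(R)=\varinjlim_{N}\fg J^{\geq N}(R)$. Since both the Chevalley--Eilenberg functor and homology commute with filtered colimits, $H_\bullet(\fg J^\geq(R))\cong\varinjlim_{N}H_\bullet(\fg J^{\geq N}(R))$. The shift automorphism $\sigma:=\mathrm{Ad}(\tau)$ of $\fg J(R)$, where $\tau=\sum_{i}e_{i,i+1}$ and $\sigma(e_{i,j})=e_{i-1,j-1}$, carries $J^{\geq N}(R)$ isomorphically onto $J^{\geq N-1}(R)$; iterating $\sigma$ identifies every term $\fg J^{\geq N}(R)$ with $\fg J^{\geq 1}(R)=\fg J_+(R)$, and under these identifications the colimit becomes the telescope
\[ H_\bullet(\fg J_+(R))\xrightarrow{\lambda_*} H_\bullet(\fg J_+(R))\xrightarrow{\lambda_*}\cdots, \]
where $\lambda:\fg J_+(R)\to\fg J_+(R)$, $e_{i,j}\mapsto e_{i+1,j+1}$, is the shift-by-one embedding (with image $\fg J^{\geq 2}(R)$). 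As the canonical map $H_\bullet(\fg J_+(R))\to H_\bullet(\fg J^\geq(R))$ out of the first telescope term is exactly the one induced by the inclusion $\fg J_+(R)\hookrightarrow\fg J^\geq(R)$, the lemma follows once $\lambda_*$ is shown to be an isomorphism.

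The main obstacle is precisely this last point. I would first warn that $\lambda_*$ is \emph{not} the identity: already on $H_1=\fg J_+(R)/[\fg J_+(R),\fg J_+(R)]$ the shift acts nontrivially, so there is no homotopy between $\Lambda^\bullet\lambda$ and $\mathrm{id}$, and the inner-automorphism trick is unavailable since $\tau\notin\fg J_+(R)$. The approach I would take is to prove bijectivity of $\lambda_*$ by a direct chain-level analysis organized around the corner idempotent $e_{1,1}$: decompose $\fg J_+(R)=\fg J^{\geq 2}(R)\oplus\fm$ as a vector space, where $\fm$ is the finite-band ``first row and column'' strip and a module over the subalgebra $\fg J^{\geq 2}(R)=\lambda(\fg J_+(R))$, and compare the Chevalley--Eilenberg complex of $\fg J_+(R)$ with that of $\lambda(\fg J_+(R))$ through the associated filtration; the expectation is that the strip $\fm$ contributes only boundaries after passage to the telescope, forcing $\lambda_*$ to be bijective. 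A more hands-on alternative is to verify injectivity and surjectivity of $\lambda_*$ termwise, exactly as in the $H_1$ computation, where the shift is seen to be bijective modulo the commutator relations. This stabilization of homology under the shift is where the genuine work lies; the reflection via $\varphi$ and the direct-limit formalism surrounding it are routine.
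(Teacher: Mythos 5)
Your overall route is the same as the paper's: the reflection $\varphi$ accounts for $H_\bullet(\fg J_+(R))\cong H_\bullet(\fg J_-(R))$ and $H_\bullet(\fg J^{\geq}(R))\cong H_\bullet(\fg J^{\leq}(R))$, and the remaining isomorphism is attacked by presenting the unbounded-corner algebra as a filtered union of shifted copies of the quadrant algebra and using that Lie algebra homology commutes with filtered colimits. The paper does exactly this on the mirror side, writing $J^{\leq}(R)=\varinjlim_n J^{\leq n}(R)$ with each $J^{\leq n}(R)\cong J_-(R)$.

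The problem is that your argument stops precisely where a proof is still required, and you say so yourself: everything reduces to the claim that the shift embedding $\lambda\colon \fg J_+(R)\to\fg J_+(R)$, $e_{i,j}\mapsto e_{i+1,j+1}$, induces an isomorphism on homology, and this is left as an ``expectation,'' with two strategies sketched but not carried out. This is a genuine gap, not a formality: a filtered system all of whose terms are isomorphic to a fixed object need not have that object as its colimit (the telescope $\Z\xrightarrow{\times 2}\Z\xrightarrow{\times 2}\cdots$ has colimit $\Z[1/2]$), so the telescope cannot be identified with $H_\bullet(\fg J_+(R))$ without controlling $\lambda_*$. Moreover, your motivating warning that the shift acts nontrivially ``already on $H_1$'' is incorrect: $\fg J_+(R)$ is perfect --- every diagonal matrix is a commutator by a telescoping-sum trick parallel to the paper's computation of $H_0(k[\Z],M_\bullet(R))$, and every single off-diagonal $X_p$ ($p\neq0$) equals $[X_p,D]$ for a suitable scalar diagonal $D\in J_+(k)$ --- so $H_1=0$ and nothing can be seen there. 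In fairness, the paper's own proof is silent on exactly the same point: it concludes $\varinjlim_n H_\bullet(\fg J^{\leq n}(R))\cong H_\bullet(\fg J_-(R))$ solely ``as $J^{\leq n}(R)$ is isomorphic to $J_-(R)$,'' never examining what the inclusions $J^{\leq m}(R)\hookrightarrow J^{\leq n}(R)$ induce on homology; so you have located, but not repaired, the weak point of the published argument. A workable repair: observe that $\lambda(x)=uxv$ with $u=\sum_{i\geq1}e_{i+1,i}$ and $v=\sum_{i\geq1}e_{i,i+1}$, both in $J_+(R)$, satisfying $vu=1_{J_+(R)}$; such corner endomorphisms of an associative algebra act as the identity on Hochschild and cyclic homology (conjugate $\mathrm{diag}(x,0)$ to $\mathrm{diag}(uxv,0)$ in $M_2(J_+(R))$ by an explicit invertible matrix), and then transporting this through the $J_+$-analogues of Lemma \ref{lemma_isom_gJ} and Corollary \ref{cor_isom-Hom-gl-J} (the mod-$n$ block decomposition works verbatim on a quadrant) together with Theorem \ref{thm_LQ-T} and Milnor--Moore shows that $\lambda_*$ is the identity on primitives, hence on all of $H_\bullet(\fg J_+(R))$.
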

\begin{proof} Since the first and the third isomorphisms are induced from the isomorphism $\varphi$, let us show the second isomorphism. 
For $n \in \Z$, set
\[ J^{\leq n}(R)=\{ M=(m_{i,j}) \in J(R) \vert \; m_{i,j}=0 \quad (\text{$\forall\, i,j$} \,\, \text{s.t.} \,\,  i> n \, \text{or}\, j> n) \, \}.
\]
By definition, it follows that $J^{\leq }(R) \cong \underset{\longrightarrow}{\lim} J^{\leq n}(R)$ via the natural embedding $J^{\leq m}(R) \hookrightarrow J^{\leq n}(R)$ ($m<n$). As $J^{\leq n}(R)$ is isomorphic to $J_-(R)$, this implies that
\[ H_\bullet(\fg J^{\leq}(R),k) \cong \underset{\longrightarrow}{\lim} \, H_\bullet(\fg J^{\leq n}(R),k) \cong H_\bullet(\fg J_-(R),k), \]
where $\fg J^{\leq n}(R)$ is the $k$-vector space $J^{\leq n}(R)$ viewed as Lie algebra.
\end{proof}
An analogous statement of Corollary \ref{cor_isom-Hom-gl-J} holds, namely, one has
\[ H_\bullet(\fg J^{\ast}(R)) \cong H_\bullet(\fgl_\infty(J^{\ast}(R))) \qquad \ast \in \{\geq, \leq\}. 
\]
Hence, the Loday-Quillen-Tsygan theorem (cf. Theorem \ref{thm_LQ-T}) implies that
it suffices to calculate $HC_{\bullet-1}(J^{\ast}(R))$ ($\ast \in \{\geq, \leq\}$). For this purpose, we first determine the Hochschild homologies $HH_\bullet(J^{\ast}(R))$  ($\ast \in \{\geq, \leq\}$). 
Recall that the subalgebras $J^{\geq }(R)$ and $J^{\leq}(R)$ of $J(R)$ can be viewed as twisted group algebras as follows: set $A^{\pm}(R)=\prod_{\pm i\geq 0}R e_i \oplus \bigoplus_{\pm i<0}R e_i$. Then, it can be checked that $J^{\geq}(R) \cong A^{+}(R)\sharp H$ and $J^{\leq}(R) \cong A^{-}(R)\sharp H$.
By \S \ref{sect_general-set-up} and Theorem \ref{thm_HS-spectral-seq}, it suffices to compute the spectral sequence \\
\centerline{$E_{p,q}^2=H_p(k[\Z], H_q(A^+(R),J^{\geq}(R))) \quad \Longrightarrow \quad HH_{p+q}(J^{\geq}(R))$.}

\smallskip

\noindent  It can be shown, as in \S \ref{sect_Hochschild-computation}, that
\[ H_\bullet(A^+(R),J^{\geq}(R)) \cong \prod_{i \geq 0}HH_\bullet(R) e_i \oplus \bigoplus_{i<0} HH_\bullet(R)e_i. \]
Denote the right hand side of this formula by $M_\bullet(R)$ as in \S \ref{sect_H(H,A)}. As $k[\Z]$ is a PID, $E_{p,q}^2=0$ unless $p=0,1$. By definition, 
one has $b(m \otimes \tau^p)=m-m[p]$ which implies that $\Ker\vert_{M_\bullet(R)\otimes k[\Z]}=M_\bullet(R) \otimes k \tau^0$ and 
$\mathrm{Im}\vert_{M_\bullet(R)\otimes k[\Z]}= M_\bullet(R)$.  Since $b(m \otimes 1 \otimes 1)=m\otimes 1$ again by definition,  we obtain 
\[ H_p(k[\Z], M_\bullet(R))=0 \qquad p \in \{0,1\}. 
\]
Thus, we obtain $HH_\bullet(J^{\geq}(R))=0$. In particular, this implies $HC_\bullet(J^{\geq}(R))=0$. 
\begin{thm} For $p \in \Z_{\geq 0}$, 
\[ H_p(\fg J_+(R),k) \cong H_p(\fg J_-(R),k)\cong H_p(\fg J^{\leq}(R),k)
\cong H_p(\fg J^{\geq}(R),k) \cong \begin{cases} 
\; k \qquad & p=0, \\ \; 0 \qquad & p>0. \end{cases}
\]
\end{thm}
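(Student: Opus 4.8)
The plan is to assemble the ingredients already in place in this section: reduce the four homologies to a single one, pass to $\fgl_\infty$, invoke the Loday--Quillen--Tsygan theorem, feed in the vanishing of cyclic homology, and conclude with Milnor--Moore.

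First I would invoke Lemma~\ref{lemma_isom-homologies}, which already identifies $H_\bullet(\fg J_+(R),k)$, $H_\bullet(\fg J_-(R),k)$, $H_\bullet(\fg J^{\leq}(R),k)$ and $H_\bullet(\fg J^{\geq}(R),k)$, so that the whole statement reduces to computing $H_\bullet(\fg J^{\geq}(R),k)$. The analogue of Corollary~\ref{cor_isom-Hom-gl-J} recorded just above then gives $H_\bullet(\fg J^{\geq}(R))\cong H_\bullet(\fgl_\infty(J^{\geq}(R)))$, and the latter carries the structure of a connected graded commutative and cocommutative Hopf algebra; it is connected because $H_0$ of any Lie algebra with trivial coefficients is $k$.

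Next I would apply the Loday--Quillen--Tsygan theorem (Theorem~\ref{thm_LQ-T}), by which the primitive part of $H_\bullet(\fgl_\infty(J^{\geq}(R)))$ equals $HC_{\bullet-1}(J^{\geq}(R))$. The essential input is the vanishing $HH_\bullet(J^{\geq}(R))=0$ established above through Stefan's spectral sequence; since the $E^1$-term of the Connes bicomplex of \S\ref{sect_bicomplex} is built entirely out of $HH_\bullet(J^{\geq}(R))$ (equivalently, by the periodicity sequence~\eqref{long_cyclic-Hoch}), this forces $HC_\bullet(J^{\geq}(R))=0$ in all degrees, so the primitive part vanishes. Finally, the Quillen version~\cite{Q} of the Milnor--Moore theorem~\cite{MM} writes a connected graded cocommutative Hopf algebra over a field of characteristic $0$ as the symmetric algebra on its primitive part; a trivial primitive part leaves only $k$ in degree $0$. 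Thus $H_0(\fg J^{\geq}(R),k)=k$ and $H_p(\fg J^{\geq}(R),k)=0$ for $p>0$, and this transports through the isomorphisms of Lemma~\ref{lemma_isom-homologies} to all four Lie algebras.

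The real content lies not in this assembly but in the already-completed vanishing $HH_\bullet(J^{\geq}(R))=0$, and that is the step I would watch most closely. The delicate point is the asymmetry of the coefficient module $M_\bullet(R)=\prod_{i\geq 0}HH_\bullet(R)e_i\oplus\bigoplus_{i<0}HH_\bullet(R)e_i$: the boundary $b(m\otimes\tau^p)=m-m[p]$ has a $\tau$-invariant cycle $m\otimes\tau$ only when $m_i=m_{i+1}$ for all $i$, yet the direct-sum factor over $i<0$ permits only finitely many nonzero entries there, forcing the constant value to be $0$ and hence $m=0$. This is exactly what annihilates $H_1(k[\Z],M_\bullet(R))$, in sharp contrast with the full $\fg J(R)$ of \S\ref{sect_H(H,A)}, where the constant sequences $HH_\bullet(R)\cdot I$ survive and yield $HH_{\bullet-1}(R)$. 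I would therefore make sure that this dichotomy between the product over the non-negative indices and the direct sum over the negative ones is precisely the mechanism collapsing both $H_0$ and $H_1$, and thereby all of $HH_\bullet(J^{\geq}(R))$.
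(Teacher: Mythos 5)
Your proposal is correct and follows essentially the same route as the paper: reduction via Lemma~\ref{lemma_isom-homologies}, passage to $\fgl_\infty(J^{\geq}(R))$, the Loday--Quillen--Tsygan theorem, the vanishing $HH_\bullet(J^{\geq}(R))=0$ from Stefan's spectral sequence (hence $HC_\bullet(J^{\geq}(R))=0$), and Milnor--Moore/Quillen to conclude. Your closing observation --- that the direct sum over $i<0$ in $M_\bullet(R)$ kills the constant ($\tau$-invariant) sequences and is exactly what distinguishes this case from the two-sided computation of \S\ref{sect_H(H,A)} --- is precisely the mechanism the paper relies on.
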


\medskip 

\appendix 

\section{Hochschild Homology over Non-commutative Ground Rings}\label{sect_Hochschild-non-comm}
Here, we generalize \S 1.2 of Loday's book \cite{Lod}. 

Let $R$ be an associative unital (bot not necessarily commutative) ring and
$S$ be its subring. For an $R$-bimodule $M$, the group of $p$-chains $C_p^S(A,M)$ is $M\otimes_S A^{\otimes_S p}\otimes_S$ which signifies the quotient of $M\otimes_S A^{\otimes_S p}$ by the relation
$m\otimes a_1\otimes \cdots \otimes a_p.s=s.m\otimes a_1\otimes \cdots \otimes a_p$ for any $s \in S$, $m \in M$ and $a_i \in A$. 
For instance, $R\otimes_S=R/[S,R]$. It is clear that the boundary map $b$ of the Hochschild complex induces a boundary map on $C_\bullet^S(R,M)$ which we denote it again by $b$. Its Homology will be denotes as $H_\bullet^S(R,M)$. \\

Suppose that a unital $k$-algebra $S$ is {\sl separable} over $k$, i.e., the $S$-bimodule map $\mu: S \otimes S^{\op} \rightarrow S$ splits. This is equivalent to the existence of an idempotent $e=\sum_i u_i \otimes v_i \in S \otimes S^{\op}$ such that $\sum_i u_iv_i=1$ and $(s\otimes 1)e=(1\otimes s)e$ for any $s\in S$.

Now, we can show the next theorem:
\begin{thm}\label{thm_non-comm-Hochschild}
Let $S$ be a separable algebra over $k$ and $R$ an associative unital $S$-algebra.
Then, for any $R$-bimodule $M$, there is a canonical isomorphism:
\[ H_\bullet(R,M) \cong H_\bullet^S(R,M). \]
\end{thm}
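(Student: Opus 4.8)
The plan is to recognize both sides as $\mathrm{Tor}$-groups over the enveloping algebra $R^e:=R\otimes_k R^{\op}$ and to upgrade the relative bar complex to an honest projective $R^e$-resolution of $R$. Recall that $H_\bullet(R,M)=\mathrm{Tor}^{R^e}_\bullet(M,R)$ is computed by the absolute bar resolution $\beta_\bullet(R)$, $\beta_n(R)=R^{\otimes(n+2)}$, in the sense that $M\otimes_{R^e}\beta_\bullet(R)=C_\bullet(R,M)$. There is an obvious termwise surjection from $\beta_\bullet(R)$ onto the relative bar complex $\beta^S_\bullet(R)$ with $\beta^S_n(R)=R^{\otimes_S(n+2)}$, and after applying $M\otimes_{R^e}(-)$ this surjection becomes exactly the canonical projection $C_\bullet(R,M)\to C_\bullet^S(R,M)$. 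Thus it suffices to prove that $\beta^S_\bullet(R)$ is again a projective $R^e$-resolution of $R$: then both complexes compute the same $\mathrm{Tor}$ and our projection is the induced comparison isomorphism, which is the asserted canonical isomorphism.

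The first step is to check, independently of separability, that $\beta^S_\bullet(R)\to R$ is a resolution. This is the usual extra-degeneracy argument: the operator $h(r_0\otimes_S\cdots\otimes_S r_{n+1})=1\otimes_S r_0\otimes_S\cdots\otimes_S r_{n+1}$ is well defined over $S$ (since $1\in R$) and is a one-sided contracting homotopy of the augmented complex, so the latter is acyclic. The second step, where separability is used, is to show that each $\beta^S_n(R)$ is projective as an $R^e$-module. Writing
\[ R^{\otimes_S(n+2)}=R\otimes_S R^{\otimes_S n}\otimes_S R=R^e\otimes_{S^e}R^{\otimes_S n}, \]
with $S^e:=S\otimes_k S^{\op}$, we see that $\beta^S_n(R)$ is the $R^e$-module induced from the $S^e$-module $V:=R^{\otimes_S n}$. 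Since induction $R^e\otimes_{S^e}(-)$ sends projectives to projectives, it is enough to know that \emph{every} $S$-bimodule is projective over $S^e$.

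The main obstacle is precisely this last point, which I would isolate as a lemma: if $S$ is separable over the field $k$, then $S^e$ is semisimple, equivalently every $S$-bimodule is $S^e$-projective. I would prove it directly from the separability idempotent $e=\sum_i u_i\otimes v_i$, with $\sum_i u_iv_i=1$ and $(s\otimes1)e=(1\otimes s)e$: this idempotent splits the multiplication $\mu\colon S^e\to S$ as $S$-bimodules, and the same $e$ furnishes, for an arbitrary $S$-bimodule $V$, a bimodule splitting of the canonical presentation of $V$ as a quotient of the free $S^e$-module on its underlying $k$-space, exhibiting $V$ as a retract of a free $S^e$-module. (Equivalently, separability is stable under $\otimes_k$ and under $(-)^{\op}$, so $S^e$ is separable over $k$ and hence semisimple.) Granting this lemma, $V=R^{\otimes_S n}$ is $S^e$-projective, so $\beta^S_n(R)$ is $R^e$-projective, and $\beta^S_\bullet(R)$ is a projective resolution. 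Everything else is then formal:
\[ H_\bullet^S(R,M)=H_\bullet\bigl(M\otimes_{R^e}\beta^S_\bullet(R)\bigr)=\mathrm{Tor}^{R^e}_\bullet(M,R)=H_\bullet(R,M), \]
the isomorphism being induced by the canonical projection, as the comparison map between two projective resolutions of $R$ lifting $\id_R$.
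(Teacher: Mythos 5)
Your proof is correct, but it takes a genuinely different route from the paper's. The paper argues entirely at the chain level: writing $e=\sum_i u_i\otimes v_i$ for the separability idempotent, it defines an explicit section $\psi\colon C_\bullet^S(R,M)\to C_\bullet(R,M)$ of the canonical projection $\phi$ by
\[ \psi(m\otimes r_1\otimes\cdots\otimes r_p)=\sum_{i_0,i_1,\ldots,i_p} v_{i_p}mu_{i_0}\otimes v_{i_0}r_1u_{i_1}\otimes\cdots\otimes v_{i_{p-1}}r_pu_{i_p}, \]
verifies $\phi\circ\psi=\id$, and then exhibits an explicit homotopy $h=\sum_i(-1)^i h_i$, also built from $e$, satisfying $b\circ h+h\circ b=\id-\psi\circ\phi$; hence $\phi_\ast$ and $\psi_\ast$ are mutually inverse. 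You instead identify both homologies as $\mathrm{Tor}_\bullet^{R^e}(M,R)$, computed by the absolute and relative bar resolutions, and you use the idempotent only once and conceptually: to prove the lemma that every $S$-bimodule is a retract of a free $S^e$-module, so that $\beta_n^S(R)\cong R^e\otimes_{S^e}R^{\otimes_S n}$ is $R^e$-projective and the comparison theorem applies. Your route explains why separability is exactly the right hypothesis, makes canonicity automatic (the isomorphism is induced by the projection, a chain map of projective resolutions lifting $\id_R$), and yields the cohomological ($\mathrm{Ext}$) analogue for free; the paper's route is more elementary --- no derived-functor machinery or balancing of $\mathrm{Tor}$ --- and produces closed formulas for the inverse and the homotopy, in keeping with the computational spirit of the rest of the paper. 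The one step you should spell out is the identification $M\otimes_{R^e}\beta_\bullet^S(R)\cong C_\bullet^S(R,M)$ as complexes: it is here that the paper's definition of $C_p^S(R,M)$ as the quotient of $M\otimes_S R^{\otimes_S p}$ by the relation $m\otimes a_1\otimes\cdots\otimes a_p.s=s.m\otimes a_1\otimes\cdots\otimes a_p$ enters, via $M\otimes_{R^e}\bigl(R^e\otimes_{S^e}X\bigr)\cong M\otimes_{S^e}X$; this check is routine, but it is what makes your comparison map literally the canonical projection of the statement.
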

\begin{proof} Let $\phi: C_\bullet(R,M) \twoheadrightarrow C_\bullet^S(R,M)$ be a canonical projection. Write $e=\sum_i u_i \otimes v_i \in S\otimes_k S^{\op}$. We define the $k$-linear map $\psi:C_\bullet^S(R,M) \rightarrow C_\bullet(R,M)$ by 
\[ \psi(m \otimes r_1\otimes \cdots \otimes r_p)=\sum_{i_0,i_1,\ldots, i_p}
v_{i_p}mu_{i_0}\otimes v_{i_0}r_1u_{i_1}\otimes v_{i_1}r_2u_{i_2}\otimes \cdots \otimes v_{i_{p-1}}r_pu_{i_p}. \]
By definition, it follows that $\phi \circ \psi=\id_{C_\bullet^S(R,M)}$. Hence, let us compute $\psi \circ \phi$. Unfortunately, this is not the identity operator, but one can construct an explicit homotopy to the identity operator as follows. 
For $0\leq i\leq p$, we set
\begin{align*}
&h_i(m\otimes r_1\otimes \cdots \otimes r_p)\\
=
&\sum_{j_0,j_1,\ldots, j_i}
mu_{j_0}\otimes v_{j_0}r_1u_{j_1}\otimes \cdots \otimes 
v_{j_{i-1}}r_iu_{j_i}\otimes v_{j_i}\otimes r_{i+1}\otimes \cdots \otimes r_p. 
\end{align*}
Then, it can be checked by direct computation (cf. Lemma 1.0.9 in \cite{Lod}) that
$h:=\sum_{i=0}^p (-1)^{i}h_i$ is the homotopy of the identity operator to $\psi \circ \phi$, i.e., 
\[  \left. \phantom{\frac{dy}{dx}} d \circ h+h \circ d \right\vert_{C_p^S(R,M)}=\id_{C_p^S(R,M)}-\psi \circ \phi. \]
Hence, the induced maps $\phi_\ast$ and $\psi_\ast$ are inverse to each other. 
\end{proof}


\begin{thebibliography}{NT}





\bibitem[CQ]{CQ}
J. Cuntz and D. Quillen, 
\textit{Algebra Extensions and Nonsingularity},
Jour. Amer. Math. Soc. {\bf 6} (1995), 251--289.

\bibitem[DJM]{DJM}
E. Date, M. Jimbo and T. Miwa,
\textit{Solitons: Differential Equations, Symmetries and Infinite Dimensional Algebras
$($translated from the original Japanese version by M. Reid$)$},
Cambridge tracts in Math. {\bf 135}, 2012. 

\bibitem[EM]{EM}
S. Eilenberg and S. MacLane,
\textit{On the Groups $H(\Pi,n)$, I},
Ann. Math. {\bf 58} (1953), 55--106.

\bibitem[FI]{FI}
A. Fialowski and K. Iohara,
\textit{Homology of Lie Algebras of Orthogonal and Symplectic Generalized Jacobi Matrices}, Atti. Acad. Naz. Lincei Rend. Lincei Mat. Appl., \textit{to appear}.

\bibitem[FT]{FT}
B. L. Feigin and B. L. Tsygan,
\textit{Cohomologies of Lie Algebras of Generalized Jacobi Matrices},
Funct. Anal. Appl. {\bf 17} (1983), 86--87.




\bibitem[HKR]{HKR}
G. Hochschild, B. Kostant and A. Rosenberg,
\textit{Differential Forms on Regular Affine Algebras},
Trans. Amer. Math. Soc. {\bf 102} (1962), 383--408.

\bibitem[HS1]{HS1}
G. Hochschild and J. -P. Serre,
\textit{Cohomology of Group Extensions},
Trans. Amer. Math. Soc. {\bf 74} (1953), 110--134.

\bibitem[HS2]{HS2}
G. Hochschild and J. -P. Serre,
\textit{Cohomology of Lie Algebras},
Ann. Math. (2) {\bf 57} (1953), 591--603.

\bibitem[JM]{JM}
M. Jimbo and T. Miwa,
\textit{Solitons and Infinite Dimensional Lie Algebras},
Publ. RIMS Kyoto Univ. {\bf 19}, 1983, 943--1001.

\bibitem[K]{K}
V. G. Kac,
\textit{Infinite Dimensional Lie Algebras},
Third ed., Cambr. Univ. Press, 1991.

\bibitem[KS]{KS}
M. Kashiwara and P. Schapira,
\textit{Sheaves on Manifolds}, 
Grund. math. Wissenshcaften {\bf 292},
Second ed., 1994. 


\bibitem[Lod]{Lod}
J. L. Loday, \textit{Cyclic Homology}, Grund. math. Wissenschaften {\bf 301}, 
Second ed., 1998.

\bibitem[LQ]{LQ}
J. L. Loday and D. Quillen,
\textit{Cyclic homology and the Lie algebra homology of matrices},
Comment. Math. Helv. {\bf 59}, 1984, 565--591.

\bibitem[M]{M}
S. MacLane,
\textit{Homology, Reprint of the 1975 edition}, 
Classics in Math., Springer Verlag, 1995, 422 pp. 

\bibitem[MM]{MM}
J. W. Milnor and J. C. Moore,
\textit{On the Structure of Hopf Algebras},
Ann. Math. {\bf 81} (1965), 211--264.


\bibitem[Q]{Q}
D. Quillen,
\textit{Rational Homotopy Theory},
Ann. Math. {\bf 90} (1969), 205--295.

\bibitem[St]{St}
D. Stefan,
\textit{Hochschild cohomology on Hopf-Galois extensions},
Jour. Pure Appl. Alg. {\bf 103} (1995), 221--233.

\bibitem[Sw]{Sw}
M. E. Sweedler,
\textit{Cohomology of Algebras over Hopf Algebras},
Trans. Amer. Math. Soc. {\bf 133} (1968), 205--239.

\bibitem[Ta]{Ta}
K. Takasaki,
\textit{A New Approach to the Self-Dual Yang-Mills Equations},
Comm. Math. Phys. {\bf 94} (1984), 33--59. 

\bibitem[Ts]{T}
B. L. Tsygan,
\textit{Homology of matrix Lie algebras over rings and the Hochschild homology},
Russ. Math. Survey {\bf 38}, 1983, 217--218.

\bibitem[W]{W}
Ch. A. Weibel,
\textit{An Introduction to Homological Algebra},
Cambr. Stud. Adv. Math. {\bf 38}, 1994.
\end{thebibliography}
\end{document}